\numberwithin{equation}{section}
\begin{document}

\fancyhf{}

\fancyhead[OR]{\thepage}

\renewcommand{\headrulewidth}{0pt}
\renewcommand{\thefootnote}{\fnsymbol {footnote}}

\theoremstyle{plain} 
\newtheorem{thm}{\indent\sc Theorem}[section] 
\newtheorem{lem}[thm]{\indent\sc Lemma}
\newtheorem{cor}[thm]{\indent\sc Corollary}
\newtheorem{prop}[thm]{\indent\sc Proposition}
\newtheorem{claim}[thm]{\indent\sc Claim}
\theoremstyle{definition} 
\newtheorem{dfn}[thm]{\indent\sc Definition}
\newtheorem{rem}[thm]{\indent\sc Remark}
\newtheorem{ex}[thm]{\indent\sc Example}
\newtheorem{notation}[thm]{\indent\sc Notation}
\newtheorem{assertion}[thm]{\indent\sc Assertion}
%
%
\numberwithin{equation}{section}
\renewcommand{\proofname}{\indent\sc Proof.} 
\def\C{\mathbb{C}}
\def\R{\mathbb{R}}
\def\Rn{{\mathbb{R}^n}}
\def\M{\mathbb{M}}
\def\N{\mathbb{N}}
\def\Q{{\mathbb{Q}}}
\def\Z{\mathbb{Z}}
\def\F{\mathcal{F}}
\def\L{\mathcal{L}}
\def\S{\mathcal{S}}
\def\supp{\operatorname{supp}}
\def\essi{\operatornamewithlimits{ess\,inf}}
\def\esss{\operatornamewithlimits{ess\,sup}}
\def\dlim{\displaystyle\lim}

\fancyhf{}

\fancyhead[EC]{W. LI, H. Wang}

\fancyhead[EL]{\thepage}

\fancyhead[OC]{Maximal functions associated with nonisotropic dilations of hypersurfaces in  $\mathbb{R}^3$}

\fancyhead[OR]{\thepage}

\renewcommand{\headrulewidth}{0pt}
\renewcommand{\thefootnote}{\fnsymbol {footnote}}

\title{\textbf{$L^{p} \rightarrow L^{q}$ estimates for maximal functions associated with nonisotropic dilations of hypersurfaces in  $\mathbb{R}^3$}
\footnotetext {This work is supported by Natural Science Foundation of China (No.11601427);
China Postdoctoral Science Foundation (No.2017M613193);  Natural Science Basic Research Plan in Shaanxi Province of China (No.2017JQ1009). }
\footnotetext {{}{2000 \emph{Mathematics Subject
 Classification}: 42B20, 42B25.}}
\footnotetext {{}\emph{Key words and phrases}: Maximal function, nonisotropic dilations, hypersurfaces, $L^{p} \rightarrow L^{q}$ estimate, local smoothing. } } \setcounter{footnote}{0}
\author{
Wenjuan Li, Huiju Wang}

\date{}
\maketitle

\begin{abstract}
The goal of this article is to establish $L^{p} \rightarrow L^{q}$ estimates for maximal functions associated with nonisotropic dilations $\delta_t(x)=(t^{a_1}x_1,t^{a_2}x_2,t^{a_3}x_3)$ of hypersurfaces $(x_{1}, x_{2},\Phi(x_1,x_2))$ in $\mathbb{R}^3$, where the Gaussian curvatures of the hypersurfaces are allowed to vanish.
When $2 \alpha_{2} = \alpha_{3}$, this problem is reduced to  study of the $L^{p} \rightarrow L^{q}$ estimates for maximal functions along the curve $\gamma(x)=(x,x^2(1+\phi(x)))$ and associated dilations $\delta_t(x)=(tx_1,t^2x_2)$. The corresponding maximal function shows features related to the Bourgain circular maximal function, whose $L^{p} \rightarrow L^{q}$ estimate has been considered by [Schlag, JAMS, 1997], [Schlag-Sogge, MRL, 1997] and [Lee, PAMS, 2003]. However, in the study of the maximal function related to the mentioned curve $\gamma(x)$ and associated dilations, we  get the $L^{p} \rightarrow L^{q}$ regularity properties for a family of corresponding Fourier integral operators which fail to satisfy the "cinematic curvature condition" uniformly, which means that classical local smoothing estimates could not be directly applied to our problem. What's more, the $L^{p} \rightarrow L^{q}$ estimates are also new for maximal functions associated with isotropic dilations of hypersurfaces $(x_{1}, x_{2},\Phi(x_1,x_2))$ mentioned before.
\end{abstract}

\section{Introduction}
The spherical maximal function
\begin{equation}
 \sup_{t >0}\left|\int_{\mathcal{S}^{n-1}}f(y-tx)d\sigma(x)\right|,
\end{equation}
where $d\sigma$ is normalized surface measure on the sphere $S^{n-1}$, was first studied by Stein  \cite{stein3} in 1976. The sharp $L^{p} \rightarrow L^{p}$ estimate was established by Stein \cite{stein3} for $p>n/(n-1)$ when $n \ge 3$, and later by  Bougain \cite{JB} for $p> 2$ when $n = 2$. Then many authors turned to the study of generalizations of the spherical maximal function, i.e. the sphere is replaced by a more general smooth hypersurface in $ {\mathbb{R}}^n$. In particular, a natural generalization is to characterize the $L^p$-boundedness properties of the maximal operator associated to hypersurface where the Gaussian curvature at some points is allowed to vanish. Related works can be found in Iosevich \cite{I},  Sogge-Stein \cite{ss}, Sogge \cite{Sogge}, Cowling-Mauceri \cite{cw1,cw2}, Nagel-Seeger-Wainger \cite{nsw},  Iosevich-Sawyer \cite{ios1,ios2}, Iosevich-Sawyer-Seeger \cite{Iss}, Ikromov-Kempe-M\"{u}ller \cite{IKMU} and references therein.

In spite of the $L^{p} \rightarrow L^{p}$ estimate, by modifying the definition of the global circular maximal function, Schlag \cite{WS1} showed that
\begin{equation}
 \sup_{t \in [1,2]}\left|\int_{\mathcal{S}^{1}}f(y-tx)d\sigma(x)\right|
\end{equation}
is actually bounded in the interior of the triangle with vertices $(0,0)$, $(1/2,1/2)$, $(2/5,1/5)$. This result was obtained using "combinatorial method" in \cite{WS1}. Based on some local smoothing estimates, an alternative proof was given by Schlag-Sogge \cite{WS2} later. Schlag-Sogge \cite{WS2} also established $L^{p} \rightarrow L^{q}$ estimates for the local maximal functions of hypersurfaces in $\mathbb{R}^{n}$, but they did not cover hypersurfaces where the Gaussian curvatures at some points are allowed to vanish. It is worth to mention that, using bilinear cone restriction estimate, Lee \cite{SL1} improved the local smoothing estimate in \cite{WS2} and then got endpoint estimate for the local circular maximal function in $\mathbb{R}^{2}$.

What's more, maximal operators defined by averages  over curves or surfaces with nonisotropic dilations have also been extensively considered. In 1970, in the study of a problem related
to Poisson integrals for symmetric spaces, Stein raised the question as to when the operator $\mathcal{M}_{\gamma}$ defined by
$$\mathcal{M}_{\gamma}f(x)=\sup_{h>0}\frac{1}{h}\int_{0}^h|f(x-\gamma(t))|dt,$$
where $\gamma(t)=(A_1t^{a_1},A_2t^{a_2},\cdots,A_nt^{a_n})$ and $A_1,A_2,\cdots,A_n$ are real, $a_i>0$, is bounded on $L^p(\mathbb{R}^n)$.
Nagel, Riviere and Wainger \cite {NRW} showed that the $L^p$-boundedness of $\mathcal{M}_{\gamma}$ holds for  $p>1$ for the special case $\gamma(t)=(t,t^2)$ in $\mathbb{R}^2$ and Stein \cite{stein} for  homogeneous curves in $\mathbb{R}^n$.  For maximal functions $\mathcal{M}$ associated with nonisotropic dilations in higher dimensions, one can see the work by Greenleaf \cite{Greenleaf}, Sogge-Stein \cite{ss}, Iosevich-Sawyer \cite{ios2}, Ikromov-Kempe-M\"{u}ller \cite{IKMU}, Zimmermann \cite{Zimmermann}. More information can be found in \cite{WL} and references therein.

In \cite{WL}, the first author of this paper established $L^p$-estimates for the maximal function related to the hypersurface $(x_{1}, x_{2},\Phi(x_1,x_2))$ in $\mathbb{R}^3$  with associated dilations $\delta_t(x)=(t^{a_1}x_1,t^{a_2}x_2,t^{a_3}x_3)$, $2\alpha_{2} \neq \alpha_{3}$,
 \begin{equation}\label{Eq1.4}
\sup_{t >0}\left|\int_{\mathbb{R}^2}f(y-\delta_t(x_1,x_2,\Phi(x_1,x_2)))\eta(x)dx\right|,
\end{equation}
where $\Phi(x_1,x_2)\in C^{\infty}(\Omega)$  satisfies
\[\partial_2\Phi(0,0)=0,\hspace{0.5cm}\partial_2^2\Phi(0,0)\neq 0.\]
It is clear that the Gaussian curvatures related to hypersurfaces in (\ref{Eq1.4}) are  allowed to  vanish.
In fact, when  dilations satisfy $2a_2 = a_3$, the similar problem has also appeared in the study of  maximal functions associated with the curve $\gamma(x)=(x,x^2(1+\phi(x)))$ and associated dilations $\delta_t(x)=(tx_1,t^2x_2)$, i.e.,
\begin{equation}\label{Eq1.3}
\sup_{t >0}\left|\int_{\mathbb{R}}f(y_1-tx,y_2-t^2x^2\phi(x))\eta(x)dx\right|,
\end{equation}
 where $\eta(x)$ is supported in a sufficiently small neighborhood of the origin. The maximal function shows features related to the Bourgain circular maximal function, which required deep ideas and local smoothing estimates established by Mockenhaupt-Seeger-Sogge  for Fourier integral operators satisfying the so-called "cinematic curvature" condition. However, she observed that  the study of (\ref{Eq1.3}) leads to a family of corresponding Fourier integral operators which fail to satisfy the "cinematic curvature condition" uniformly, which means that classical local smoothing estimates could not be directly applied there. In \cite{WL},  new ideas are established to obtain $L^{4}$-estimate for Fourier integral operators which fail to satisfy the "cinematic curvature condition" uniformly, and finally establish sharp $L^p$-estimates for the maximal function (\ref{Eq1.3}).

By modifying the definition of the  maximal function  defined in (\ref{Eq1.4}) so that the supremum is taken over $t \in [1,2]$, a natural question is to ask the $L^{p} \rightarrow L^{q}$ boundedness of the maximal operators. When $2\alpha_{2} = \alpha_{3}$, we will need to study the  $L^{p} \rightarrow L^{q}$ regularity property of  Fourier integral operators which fail to satisfy the "cinematic curvature condition" uniformly. Furthermore, this research will lead to better understanding of the maximal operator associated with isotropic dilations of  hypersurfaces in $\mathbb{R}^{3}$ where the Gaussian curvatures at some points are allowed to vanish, see Corollary \ref{co1.3} in this article.

We concentrate ourselves to solve this problem in this paper. The corresponding main results will be introduced in subsection \ref{subsection1.1} and \ref{subsection1.2}, respectively.

\subsection{Main theorems for nonisotropic dilations of curves  in $\mathbb{R}^{2}$}\label{subsection1.1}
Let $\phi \in C^{\infty}(I,\mathbb{R})$, where $I$ is a bounded interval containing the origin, and
\begin{equation}\label{phi}
\phi(0)\neq 0; \hspace{0.1cm}\phi^{\prime}(0) \neq 0.
 \end{equation}
We show $L^p \rightarrow L^{q}$ estimates for  maximal functions along curves $(x,x^{2}\phi(x))$ with nonisotropic dilations $(t,t^{2})$.

\begin{thm}\label{planetheorem}
Define the maximal operator
\begin{equation}\label{Eqlocmaximalfunction}
\mathcal{M}f(y):=\sup_{t \in [1,2]}\left|\int_{\mathbb{R}}f(y_1-tx,y_2-t^2x^2\phi(x))\eta(x)dx\right|,
\end{equation}
where $\eta(x)$ is supported in a sufficiently small neighborhood of the origin, $\phi$ satisfies (\ref{phi}). Then for $ \frac{1}{2p} < \frac{1}{q} \le \frac{1}{p} $, $\frac{1}{q}  > \frac{3}{p} -1$,
there exists a constant
$C_{p,q}$ such that the following inequality holds true:
\begin{equation}\label{equ:planem=1}
\|\mathcal{M}f\|_{L^{q}}\leq  C_{p,q} \|f\|_{L^{p}}, \hspace{0.5cm}f\in C_0^{\infty}(\mathbb{R}^2).
\end{equation}
\end{thm}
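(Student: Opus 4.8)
The plan is to decompose the operator dyadically in frequency and recast it as a sum of Fourier integral operators, then prove fixed-time $L^p \to L^q$ bounds together with local smoothing estimates for these pieces, handling separately the region where the phase is genuinely curved and the degenerate region where it is not. First I would linearize: writing $A_t f(y) = \int f(y_1 - tx, y_2 - t^2 x^2 \phi(x))\eta(x)\,dx$ and inserting a Littlewood--Paley decomposition $\sum_j P_j$ in the frequency variable dual to $y$, one obtains $A_t f = \sum_j A_t^j f$ where $A_t^j$ has a kernel oscillating at frequency $\sim 2^j$. For the low-frequency part $j \le 0$ the operator is harmless (the averaging is over a compact curve, so $\sup_{t\in[1,2]}|A_t^0 f|$ maps $L^p \to L^q$ on the whole claimed range by Young/Hölder). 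The substance is the large-$j$ sum. After a stationary-phase analysis in $x$ (the critical point equation being $\partial_x[\xi_1 x + \xi_2 t^2 x^2 \phi(x)] = 0$), $A_t^j$ becomes, up to rapidly decaying errors, a Fourier integral operator of order $-1/2$ in two space dimensions with a phase $\Phi(y,t,\xi)$ whose Hessian degenerates precisely where $\phi'$ forces the second derivative of $x \mapsto x^2\phi(x)$ to vanish to higher order — this is exactly the failure of the uniform cinematic curvature condition flagged in the introduction.

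The key steps, in order, are: (i) a fixed-time estimate, $\|A_t^j f\|_{L^q} \lesssim 2^{j(\text{something})}\|f\|_{L^p}$, obtained from the $L^p \to L^q$ mapping of FIOs of order $-1/2$ — this already gives, after summing in $j$ with the trivial Sobolev embedding in $t$, the line $\frac{1}{q} = \frac{1}{p}$ and a neighborhood, but loses at the diagonal; (ii) a local smoothing estimate, $\big\| \sup_{t\in[1,2]}|A_t^j f| \big\|_{L^q} \lesssim 2^{j(\epsilon - \delta)}\|f\|_{L^p}$ with a genuine gain $\delta > 0$ for $(1/p,1/q)$ in the open region; this is where one must borrow the new $L^4$-type estimate for FIOs failing the uniform cinematic curvature condition from \cite{WL} and upgrade it to an $L^p \to L^q$ statement; (iii) interpolation between the trivial $L^\infty \to L^\infty$ bound, the $L^1 \to L^1$ bound, the $L^2$-based local smoothing bound (with $\frac{1}{2p} < \frac{1}{q}$ entering as the Sobolev-trace threshold $\sup_t \lesssim (\text{derivative in }t)^{1/q'}\cdot(\cdot)^{1/q}$), and the $L^4$ estimate, to fill out the quadrilateral cut out by $\frac{1}{2p} < \frac 1q \le \frac 1p$ and $\frac 1q > \frac 3p - 1$; (iv) summation over $j$, which converges exactly on the open region and is what forces the strict inequalities.

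To carry out (ii) I would split the $x$-support of $\eta$ into the region $\{|x| \le c\}$ where $\phi(x) \approx \phi(0) \ne 0$ so that $x^2\phi(x)$ has nonvanishing second derivative and the phase satisfies the standard cinematic curvature condition (Mockenhaupt--Seeger--Sogge / Lee local smoothing applies verbatim, giving the full expected gain), versus the transition region where $\phi'(0)\ne 0$ makes $\partial_x^2(x^2\phi(x))$ small — there one rescales parabolically ($x \mapsto \lambda x$, absorbing powers of $\lambda$) to reduce to a normalized model at each scale and applies the \cite{WL} estimate uniformly in the scale, then sums the scales. The main obstacle is precisely step (ii) in this second region: one must show that the parabolic rescaling produces a geometrically decaying series of contributions rather than merely a bounded one, i.e. that the loss from the failure of uniform cinematic curvature is strictly better than the trivial bound by a fixed power, uniformly down to the smallest relevant scale $\sim 2^{-j/N}$; controlling this requires a careful bookkeeping of how the order of vanishing of $\phi'$ interacts with the frequency-localization scale, and it is here that the endpoint-type constraints $\frac 1q > \frac 3p - 1$ and $\frac{1}{2p} < \frac 1q$ are genuinely sharp. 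The remaining steps are, by comparison, routine once (ii) is in hand.
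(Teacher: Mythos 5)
Your overall architecture — dyadic frequency decomposition, stationary phase to produce FIOs of order $-1/2$, local smoothing in $t$, interpolation — matches the paper's skeleton, but there are two substantive problems with the way you fill it in.

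First, your geometric diagnosis of where cinematic curvature fails is backwards. You propose to treat the region $\{|x| \le c\}$, where $\phi(x) \approx \phi(0)\ne 0$, as the ``good'' region where Mockenhaupt--Seeger--Sogge/Lee local smoothing applies verbatim, and to isolate a ``transition region'' where $\partial_x^2(x^2\phi(x))$ is small. But $\partial_x^2(x^2\phi(x))\big|_{x=0}=2\phi(0)\ne 0$ is bounded away from zero near the origin; the curve curvature is fine. The actual degeneracy comes from the interaction with the parabolic dilations $(tx,t^2x^2)$: after the paper's rescaling $T_kf(x_1,x_2)=2^{3k/p}f(2^kx_1,2^{2k}x_2)$ on the dyadic annulus $|x|\sim 2^{-k}$, the phase of the resulting FIO is $\frac{\xi_1^2}{2\xi_2}+\delta\frac{\xi_1^3}{t\xi_2^2}\phi'(0)+O(\delta^2)$ with $\delta=2^{-k}$. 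The leading term $\frac{\xi_1^2}{2\xi_2}$ is $t$-\emph{independent}; the entire $t$-dependence enters only at order $\delta$ through the $\phi'(0)$ term. So cinematic curvature degenerates precisely as $k\to\infty$, i.e.\ near $x=0$ — exactly the region you declared unproblematic — and it is the hypothesis $\phi'(0)\ne 0$ that prevents the degeneration from being total. Because you have mislocated the degeneracy, your plan to apply the classical local smoothing theorem near the origin and do something special farther out would not run.

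Second, you do not identify the crucial dichotomy between the spatial-dyadic scale $k$ and the frequency scale $j$, which is the organizing principle of the paper's proof. The paper splits into $j\le 2k$ — where no local smoothing is available but $j$ is comparatively small, so a crude bound from interpolating the kernel estimate $L^1\to L^\infty$ with the $L^p\to L^p$ bound of \cite{WL} (their Lemma \ref{lemmaL^2}) suffices — and $j>2k$, where $\delta^{-2}=2^{2k}\le 2^j$ guarantees the $t$-derivative of the phase is at least $\sim 2^{j}\cdot 2^{-k}\cdot 2^{-2l}$ on a $2^{-l}$-angular sector with $l\le (j-k)/2$, so the curvature is ``not too small'' relative to frequency and one can prove a genuine $L^p\to L^q$ smoothing estimate (Theorem \ref{L^(p,q)therom}). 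Your proposal gestures at ``borrowing the $L^4$ estimate from \cite{WL} and upgrading it to $L^p\to L^q$'' but offers no mechanism; the paper's actual mechanism is a Whitney decomposition in the angular variable into sectors of width $2^{-l}$ down to the threshold $2^{-(j-k)/2}$, combined with the bilinear oscillatory-integral estimate of Lee \cite{SL2} applied on $2^{-l-k}\times 2^{-2l-k}$ rectangles, followed by an orthogonality/summation argument. That step (your (ii)) is where the theorem actually lives, and the proposal as written does not contain it.
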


\begin{rem}
Simple calculations show that $(1/p,1/q)$ in Theorem \ref{planetheorem} actually locate in the triangle with vertices $(0,0)$, $(1/2,1/2)$, $(2/5,1/5)$. This corresponds to the circular maximal function studied by \cite{WS1,WS2}.
\end{rem}

In order to obtain the $L^{p} \rightarrow L^{q}$ estimate  for $\mathcal{M}$, as in the study of $L^{p} \rightarrow L^{p}$ estimate for the global maximal functions defined by
 \begin{equation*}
\sup_{t >0}\left|\int_{\mathbb{R}}f(y_1-tx,y_2-t^2x^2\phi(x))\eta(x)dx\right|,
\end{equation*}
we need to consider a family of corresponding Fourier integral operators which fail to satisfy the "cinematic curvature condition" uniformly, which means that classical local smoothing estimates could not be directly applied to our problem. In order to overcome the above difficulty and finally establish $L^{p} \rightarrow L^{q}$ estimate  for $\mathcal{M}$, we adopt the following strategy.

(i) We break $I$ into dyadic intervals with length $2^{-k}$, $k \ge log(1/|I|)$. By $|I|$ we mean the length of $I$. Meanwhile, we decompose the frequency space of $f$ into $\{\xi \in \mathbb{R}^{2}: |\xi| \leq 1 \}$ and $\{\xi \in \mathbb{R}^{2}: |\xi| \approx 2^{j} \}$, $j \ge 1$. The difficulty lies in the case when $j$ is sufficiently large. It can be observed that for each $j$, $j \gg 1$, the principal curvature for the surfaces related to the  corresponding Fourier integral operators vanishes as $k$ tends to infinity. Therefore, we consider  $k \leq j/2$ and $k > j/2$ respectively.

(ii) For $k > j/2$, no  local smoothing estimates can be established in this case. We only get basically $L^{p} \rightarrow L^{q}$ estimate  for the local maximal operator defined by inequality (\ref{Eq2.28}), see Lemma \ref{lemmaL^2} below. Fortunately, we found Lemma \ref{lemmaL^2} is sufficient for us to finish the proof of Theorem \ref{planetheorem} since $j$ is "small" here.

(iii) For $k \leq j/2$, by Sobolev's embedding Lemma, we are left to consider a class of Fourier integral operators defined by inequality (\ref{estimate:L4}), in which the principal curvature of the related surfaces is not so "small", since upper bound of $k$ is dominated by $j/2$. This phenomenon allows us to obtain $L^{p} \rightarrow L^{q}$ estimate for these Fourier integral operators in Theorem \ref{L^(p,q)therom}, and the proof  of Theorem \ref{L^(p,q)therom}  will be covered in Section 3. In the proof of Theorem \ref{L^(p,q)therom}, we  use  Whitney type decomposition and a bilinear estimate established  by \cite{SL2}.
We remark that a similar but worse result can be obtained by interpolation with $L^{4}$-estimate from Theorem 2.9 in  \cite{WL} and the $L^{1} \rightarrow L^{\infty}$ estimate.

Moreover, the necessary conditions for Theorem \ref{planetheorem} are also considered in Section 4. Unfortunately, we can not show the sharpness of Theorem \ref{planetheorem} for some technical reason.

\subsection{Main theorems for surfaces with one non-vanishing principal curvature in $\mathbb{R}^{3}$}\label{subsection1.2}
Let $\Omega$ be an open neighborhood of the origin. Suppose $\Gamma$ is a hypersurface in $\mathbb{R}^3$ which is parametrized as the graph
of a smooth function $\Phi: \Omega \rightarrow \mathbb{R}$ at the origin, i.e. $\Gamma=\{(x,\Phi(x)), x\in \Omega\subset \mathbb{R}^2\}$. Denote by $\delta_t$ the nonisotropic dilations in $\mathbb{R}^3$ given by
\begin{equation}\label{dilation}
\delta_t(x)=(t^{a_1}x_1,t^{a_2}x_2,t^{a_3}x_3).
\end{equation}
We show $L^{p} \rightarrow L^{q}$ estimates for  maximal functions related to hypersurfaces with at least one non-vanishing principal curvature.


\begin{thm}\label{3nonvanish}
Assume that $\Phi(x_1,x_2)\in C^{\infty}(\Omega)$  satisfies
\begin{equation}\label{conditionnonvani}
\partial_2\Phi(0,0)=0,\hspace{0.5cm}\partial_2^2\Phi(0,0)\neq 0,
\end{equation}
and  $2a_2\neq a_3$. Define the maximal function by
\begin{equation}
\mathcal{M}f(y):=\sup_{t \in [1,2]}\left|\int_{\mathbb{R}^2}f(y-\delta_t(x_1,x_2,\Phi(x_1,x_2)))\eta(x)dx\right|,
\end{equation}
where $\eta$ is supported in a sufficiently small neighborhood $U\subset \Omega$ of the origin. Then for  $ \frac{1}{2p} < \frac{1}{q} \le \frac{1}{p} $, $\frac{1}{q}  > \frac{3}{p} -1$,
there exists a constant
$C_{p,q}$ such that the following inequality holds true:
\begin{equation}\label{equ:dimension3=1}
\|\mathcal{M}f\|_{L^{q}}\leq  C_{p,q} \|f\|_{L^{p}}, \hspace{0.5cm}f\in C_0^{\infty}(\mathbb{R}^3).
\end{equation}
\end{thm}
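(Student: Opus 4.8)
The plan is to run a frequency--localised oscillatory integral analysis, after first removing the ``good'' variable $x_2$ by stationary phase. Write $\widehat f=\sum_{j\ge 0}\widehat{f_j}$, with $\widehat{f_0}$ supported in $\{|\xi|\lesssim 1\}$ and $\widehat{f_j}$ in $\{|\xi|\approx 2^j\}$, so that $\mathcal Mf\le\sum_j\mathcal M_jf$ where $\mathcal M_jf(y)=\sup_{t\in[1,2]}|A^j_tf(y)|$ and
\[
A^j_tf(y)=\int_{\mathbb{R}^3}e^{\,iy\cdot\xi}\,m_j(t,\xi)\,\widehat f(\xi)\,d\xi,\qquad
m_j(t,\xi)=\int_{\mathbb{R}^2}e^{-i\Psi(x,t,\xi)}\,\eta(x)\,\beta_j(\xi)\,dx ,
\]
with $\Psi(x,t,\xi)=t^{a_1}x_1\xi_1+t^{a_2}x_2\xi_2+t^{a_3}\Phi(x_1,x_2)\xi_3$. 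The piece $j=0$ is trivial: $\mathcal M_0$ is dominated by the Hardy--Littlewood maximal operator composed with a fixed Schwartz convolution and is bounded $L^p\to L^q$ on the full stated range. Fix $j\ge 1$.

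First I would localise in $\xi$. Because $\partial_2\Phi(0,0)=0$ and $\eta$ lives in a small neighbourhood of $0$, the $x$--gradient of $\Psi$ can vanish in $\supp\eta$ only when $\xi$ lies in a thin cone of the form $\{|\xi_2|\le\varepsilon|\xi_3|,\ |\xi_1+t^{a_3-a_1}\partial_1\Phi(0,0)\xi_3|\le\varepsilon|\xi_3|\}$; off this cone, integration by parts in $x$ gives $|m_j|+|\partial_{t,\xi}m_j|\lesssim_{N} 2^{-Nj}$, so those contributions sum harmlessly in every $L^p\to L^q$. On the cone $|\xi_3|\approx|\xi|\approx 2^j$, and $\partial_{x_2}^2\Psi=t^{a_3}\xi_3\,\partial_2^2\Phi(x)$, which by (\ref{conditionnonvani}) and the support restriction is $\approx 2^j$ uniformly in $t\in[1,2]$. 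I would therefore apply stationary phase in the single variable $x_2$: the equation $\partial_{x_2}\Psi=0$ has a unique nondegenerate root $x_2=X(x_1,t^{a_2-a_3}\xi_2/\xi_3)$ in $\supp\eta$, yielding
\[
m_j(t,\xi)=2^{-j/2}\int e^{i(t^{a_1}x_1\xi_1-t^{a_3}\xi_3\mathcal G(x_1,t^{a_2-a_3}\xi_2/\xi_3))}\,a_j(x_1,t,\xi)\,dx_1 ,
\]
where $\mathcal G(x_1,r)=rX(x_1,r)+\Phi(x_1,X(x_1,r))$ and $a_j$ is a symbol of order $0$, uniformly in $j$ and $t$ (plus a lower order remainder that is $O(2^{-3j/2})$). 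Substituting this back, $A^j_t$ becomes, up to the factor $2^{-j/2}$, a Fourier integral operator of order $0$ on $\mathbb{R}^3$ carrying an additional integration in the parameter $x_1$ over a fixed small interval.

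This is where the hypothesis $2a_2\neq a_3$ is decisive. For the reduced family I would verify the cinematic curvature condition in $t$: the relevant curves are $t\mapsto\bigl(t^{a_1}x_1,\ t^{a_3}\mathcal G(x_1,t^{a_2-a_3}s)\bigr)$ with $s=\xi_2/\xi_3$ frozen, and a direct computation shows their curvature is bounded below, at the base point, by $|a_3-2a_2|\,|\partial_2^2\Phi|$ up to lower order terms; since $\partial_2^2\Phi(0,0)\neq 0$ and $a_3\neq 2a_2$ this lower bound is \emph{uniform} in $x_1$, $s$ and $j$. This is exactly the uniformity that breaks down in Theorem \ref{planetheorem} --- where the analogous curvature degenerates and forces the splitting $k\le j/2$ versus $k>j/2$ --- and it lets the classical $L^p\to L^q$ local smoothing technology be applied directly. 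Concretely, on each dyadic piece I would invoke the $L^p\to L^q$ estimates of Schlag \cite{WS1}, Schlag--Sogge \cite{WS2} and Lee \cite{SL1} for averaging operators / Fourier integral operators satisfying the cinematic curvature condition --- equivalently, run the Whitney decomposition plus bilinear estimate \cite{SL2} argument used to prove Theorem \ref{L^(p,q)therom}. Combined with the $2^{-j/2}$ gain from the $x_2$ stationary phase, Sobolev embedding in the shell $\{|\xi|\approx 2^j\}$, and (if needed) a further dyadic decomposition $|x_1|\approx 2^{-k}$ to keep the symbol bounds uniform, this gives $\|\mathcal M_jf\|_{L^q}\lesssim 2^{-\varepsilon(p,q)j}\|f\|_{L^p}$ with $\varepsilon(p,q)>0$ precisely for $(1/p,1/q)$ in the triangle with vertices $(0,0)$, $(1/2,1/2)$, $(2/5,1/5)$; summing in $j$ completes the proof, and the same triangular region appears because, after removing $x_2$, the surviving geometry is that of the two--dimensional circular maximal problem.

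The step I expect to be the main obstacle is making the cinematic curvature lower bound for the reduced operator genuinely uniform in all auxiliary parameters --- $j$, $t\in[1,2]$, the point $x_1$ in the small interval, and the frozen ratio $s=\xi_2/\xi_3$ --- given that $\mathcal G$ is only defined implicitly through $X$; a secondary difficulty is carrying out the $x_1$ integration (and, where present, summing the $k$--decomposition) without eroding the gain in $j$. Everything else is a routine adaptation of the scheme of Section \ref{subsection1.1} together with the proof of Theorem \ref{L^(p,q)therom}.
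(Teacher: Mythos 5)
Your proposal takes essentially the same approach as the paper: freeze $x_1$, run stationary phase in $x_2$ to reduce to a two-dimensional Fourier integral operator in $(y_2,y_3,t)$ depending on the parameter $x_1$, observe that $2a_2\neq a_3$ makes the cinematic curvature condition hold uniformly so that Lee's $L^p\to L^q$ local smoothing estimate (Theorem~\ref{Lee}, i.e.\ Corollary~1.5 of \cite{SL2}) applies directly, and then integrate in $x_1$ by Young's inequality. The paper does not need the optional further dyadic decomposition in $|x_1|$ you mention --- that device only becomes necessary in the degenerate case $2a_2=a_3$ (Theorem~\ref{theocurvanishi}) --- and it also does not dwell on the uniformity of the curvature bound, treating the verification of Lee's hypotheses as routine; otherwise the two arguments coincide.
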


Notice that when $\alpha_{1}=\alpha_{2} = \alpha_{3}=1$, we get the following corollary.

\begin{cor}\label{co1.3}
Assume that $\Phi(x_1,x_2)\in C^{\infty}(\Omega)$  satisfies inequality (\ref{conditionnonvani}). Define the maximal function by
\begin{equation}
\mathcal{M}f(y):=\sup_{t \in [1,2]}\left|\int_{\mathbb{R}^2}f(y-t(x_1,x_2,\Phi(x_1,x_2)))\eta(x)dx\right|,
\end{equation}
where $\eta$ is supported in a sufficiently small neighborhood $U\subset \Omega$ of the origin. Then for  $ \frac{1}{2p} < \frac{1}{q} \le \frac{1}{p} $, $\frac{1}{q}  > \frac{3}{p} -1$,
there exists a constant
$C_{p,q}$ such that the following inequality holds true:
\begin{equation}\label{equ:dimension3=1}
\|\mathcal{M}f\|_{L^{q}}\leq  C_{p,q} \|f\|_{L^{p}}, \hspace{0.5cm}f\in C_0^{\infty}(\mathbb{R}^3).
\end{equation}
\end{cor}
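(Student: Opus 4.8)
The plan is to deduce Corollary \ref{co1.3} directly from Theorem \ref{3nonvanish} together with Theorem \ref{planetheorem}, by observing that the isotropic case $a_1=a_2=a_3=1$ is precisely the borderline case $2a_2=a_3$ that is \emph{excluded} from Theorem \ref{3nonvanish}. Thus Corollary \ref{co1.3} is not a literal specialization of Theorem \ref{3nonvanish}; rather, it is the analogue of Theorem \ref{3nonvanish} in the remaining case, and its proof will mirror the reduction sketched in the introduction: when $2a_2=a_3$, the three-dimensional problem collapses (after freezing the $x_1$-variable and rescaling) onto the planar maximal function of Theorem \ref{planetheorem}. So the first step is to record this reduction carefully.

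First I would localize. Since $\partial_2\Phi(0,0)=0$ and $\partial_2^2\Phi(0,0)\neq 0$, by Taylor expansion in the $x_2$-variable we may write, on a sufficiently small neighborhood $U$ of the origin,
\begin{equation*}
\Phi(x_1,x_2)=\Phi(x_1,0)+\partial_1\Phi(x_1,0)\,\text{(absorbed)}+\tfrac12\,x_2^2\,\psi(x_1,x_2),
\end{equation*}
where $\psi$ is smooth and $\psi(0,0)=\partial_2^2\Phi(0,0)\neq 0$; here the terms depending only on $x_1$ can be treated as a lower-order perturbation in the $x_1$-direction since on $\supp\eta$ the map $x_1\mapsto x_1$ is the identity and $x_1\mapsto \Phi(x_1,0)$ is smooth, so that the average in the $(y_1,y_3)$-variables is an averaging operator over a smooth graph which only improves estimates. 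The essential model is therefore
\begin{equation*}
\int_{\mathbb{R}^2} f\bigl(y_1-tx_1,\;y_2-tx_2,\;y_3-t\,x_2^2\,\psi(x_1,x_2)\bigr)\,\eta(x)\,dx .
\end{equation*}
Next I would freeze $x_1$: write $\eta(x)=\eta_1(x_1)\eta_2(x_2)$ (or decompose a general $\eta$ into such products up to an error), and for each fixed $x_1$ near $0$ substitute $\phi(x_2):=\psi(x_1,x_2)$, noting $\phi(0)\neq 0$; to also get $\phi'(0)\neq 0$ one either incorporates a harmless linear-in-$x_2$ term left over from the Taylor expansion of $\Phi$, or one first performs a generic linear change of the $(x_1,x_2)$-coordinates so that the mixed derivative $\partial_1\partial_2\Phi(0,0)$ becomes nonzero, which after the reduction produces exactly the hypothesis \eqref{phi}. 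Integrating in $x_1$ and applying the triangle inequality under the supremum, the $L^p\to L^q$ bound for the three-dimensional operator follows from the uniform-in-$x_1$ planar estimate of Theorem \ref{planetheorem} applied in the $(y_2,y_3)$-variables, with the $y_1$-variable carried along as a parameter (Minkowski / Fubini, using that $q\ge p$ so that the $L^p_{y_1}\to L^q_{y_1}$ trivial embedding on a compact $x_1$-range is available — more precisely one uses that the $x_1$-integration is over a fixed compact set and interpolates the pointwise-in-$y_1$ planar bound with the $L^1\to L^\infty$ bound).

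The main obstacle I expect is precisely the careful handling of the $x_1$-dependence: the reduced "planar" curve $x_2\mapsto (x_2, x_2^2\psi(x_1,x_2))$ has coefficients varying with the frozen parameter $x_1$, so one needs the estimate of Theorem \ref{planetheorem} to be \emph{uniform} over a compact family of phases $\phi(\cdot)=\psi(x_1,\cdot)$ satisfying \eqref{phi} uniformly — this is true because the proof of Theorem \ref{planetheorem} only uses finitely many derivative bounds on $\phi$ and a lower bound on $|\phi(0)|,|\phi'(0)|$, all of which hold uniformly on a small enough $U$ — and then to reassemble the $x_1$-integral without losing in the exponents. A secondary technical point is the coordinate change making $\partial_1\partial_2\Phi(0,0)\neq 0$: this is where the generic hypothesis is \emph{not} automatic, and one must check that such a linear transformation (possibly after a preliminary rotation in $(x_1,x_2)$, which is an invertible affine change preserving the class of maximal operators up to constants and adjusting $\eta$) can always be arranged given only \eqref{conditionnonvani}; if $\partial_1\partial_2\Phi(0,0)$ happens to vanish one instead absorbs a linear term by translating $x_2$, and verifies the resulting $\phi$ still satisfies \eqref{phi}. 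Once these uniformity and normalization issues are dispatched, the range $\frac{1}{2p}<\frac1q\le\frac1p$, $\frac1q>\frac3p-1$ is inherited verbatim from Theorem \ref{planetheorem}, completing the proof.
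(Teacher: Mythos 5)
The proposal rests on a false premise. You assert that the isotropic case $a_1=a_2=a_3=1$ is ``precisely the borderline case $2a_2=a_3$ that is excluded from Theorem \ref{3nonvanish}.'' But with $a_2=a_3=1$ one has $2a_2=2\neq 1=a_3$, so the hypothesis $2a_2\neq a_3$ of Theorem \ref{3nonvanish} is \emph{satisfied}, not violated. Geometrically: the degeneracy $2a_2=a_3$ occurs when the dilation $(t^{a_2}x_2,t^{a_3}x_3)$ respects the leading parabola $x_3=x_2^2$ in the $(x_2,x_3)$-plane, which kills the cinematic curvature; isotropic scaling $t(x_2,x_3)$ does not do this. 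Consequently the paper's proof of Corollary \ref{co1.3} is simply the observation that it is a literal specialization of Theorem \ref{3nonvanish} to $a_1=a_2=a_3=1$ --- a one-line deduction --- and the entire machinery you build (Taylor expansion of $\Phi$, freezing $x_1$, reduction to the planar Theorem \ref{planetheorem} with uniformity in the frozen parameter, normalization of $\partial_1\partial_2\Phi(0,0)$) is not only unnecessary but aimed at the wrong theorem.

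The strategy you describe is, roughly, the one the paper uses for Theorem \ref{theocurvanishi}, which does treat $2a_2=a_3$; but note that even there the paper does \emph{not} handle a general $\Phi(x_1,x_2)$ with (\ref{conditionnonvani}) --- it requires the very special product structure $\Phi(x_1,x_2)=x_2^2\phi(x_2)$, independent of $x_1$. Your sketch silently assumes that an arbitrary $\Phi$ satisfying (\ref{conditionnonvani}) can be reduced by a Taylor expansion and change of variables to a family $\psi(x_1,\cdot)$ satisfying the planar hypothesis (\ref{phi}) uniformly, which the paper does not claim and which is not obviously true (in particular (\ref{phi}) requires $\phi'(0)\neq 0$, and neither your proposed coordinate change nor a translation in $x_2$ manifestly produces this in all cases, as you yourself flag). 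Fortunately none of this is needed: Corollary \ref{co1.3} is the non-degenerate case, so one should simply cite Theorem \ref{3nonvanish} with $a_1=a_2=a_3=1$.
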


\begin{rem}
We note that \cite{WS2} established $L^{p} \rightarrow L^{q}$ estimates for the local maximal functions  of hypersurfaces  with non-vanishing Gaussian curvature in $\mathbb{R}^{n}$. While in Corollary \ref{co1.3}, the Gaussian curvatures  of  $(x_{1},x_{2}, \Phi(x_{1},x_{2}))$ are allowed to vanish everywhere.
\end{rem}

We just briefly sketch the proof of Theorem \ref{3nonvanish} here, since most of the details can be found in \cite{WL}, Section 5.1.1. We can always choose non-negative functions $\eta_1$,
$\eta_2$ $\in C_0^{\infty}({\mathbb{R}})$ so that $\eta(x)\leq \eta_1(x_1)\eta_2(x_2)$.
Since
\begin{equation*}
\left|\int_{\mathbb{R}^2}f(y-\delta_t(x_1,x_2,\Phi(x_1,x_2)))\eta(x)dx\right |\leq \int_{\mathbb{R}^2}|f|(y-\delta_t(x_1,x_2,\Phi(x_1,x_2)))\eta_1(x_1)\eta_2(x_2)dx,
\end{equation*}
we may assume $\eta(x)=\eta_1(x_1)\eta_2(x_2)$ and  $f\geq
0$, $a_1=1$. Set $(y_2, y_3)=y'$ and $(\xi_2,\xi_3)=\xi'$. Denote $1+a_2+a_3$ by $Q$ and $(t^{a_2}\xi_2, t^{a_3}\xi_3)$ by
$\delta'_t\xi'$.

First we "freeze" the first variable $x_1$ and  apply the method of stationary phase to curves in $(x_2, x_3)-$ plane, the proof of Theorem \ref{3nonvanish} can be reduced  to estimate the local maximal function defined by
\begin{equation}
\widetilde{\mathcal{M}_{j,loc}^1}f(y):=\sup_{t\in[1,2]}|\widetilde{A_{t,j}^1}f(y)|,
\end{equation}
where
\begin{equation}
\widetilde{A_{t,j}^1}f(y):=\int_{\mathbb{R}}\eta_1(\frac{x_1}{t})\int
_{{\mathbb{R}}^2}e^{i(\xi'\cdot y'-t^{a_3}\xi_3\tilde{\Psi}(\frac{x_1}{t},s))}E_{x_1/t}(\delta'_t\xi')
\beta(2^{-j}|\delta'_t\xi'|)f(y_1-x_1,\widehat{\xi'})d\xi'dx_1,\
\end{equation}
in which  $j\gg 1$, the non-negative function $\beta\in C_0^{\infty}(\mathbb{R})$ such that supp $\beta\subset[1/2,2]$,
\begin{equation*}
E_{x_1}(\delta'_t\xi'):=
\frac{\chi_{x_1}(t^{a_2}\xi_2/t^{a_3}\xi_3)}{(1+|\delta'_t\xi'|)^{1/2}}
A_{x_1}(\delta'_t\xi'),
\end{equation*}
$f(x,\hat{\xi'})$ denotes the partial Fourier transform with respect to the $\xi'$ variables
 and  $\chi_{x_1}$ is a smooth function supported on the set $\{z:|z|<\epsilon_{x_1}\}$, where $\epsilon_{x_1}$ can be controlled by a small positive constant independent of $x_1$. Meanwhile, $A_{x_1}$ is a symbol of order zero.
 The phase function $\tilde{\Psi}(x_1,s):=\Psi(x_1,\psi(x_1,s),s)$,
\begin{equation}
s:=s(\xi',t)=-\frac{t^{a_2}\xi_2}{t^{a_3}\xi_3}, \hspace{0.3cm}\textrm{for} \hspace{0.2cm} \xi_3\neq 0,
\end{equation}
and
\begin{equation}
\Psi(x_1,x_2,s):=-sx_2+\Phi(x_1,x_2).
\end{equation}
where $x_1$ and $s$ are enough small. Here  $\psi$ satisfies
\begin{equation}\label{equation}
\partial_2\Phi(\frac{x_1}{t},\psi(\frac{x_1}{t},s))=s.
\end{equation}

Set
\begin{equation}
Q_{x_1}(y',t,\xi')=\xi'\cdot y'-t^{a_3}\xi_3\tilde{\Psi}(\frac{x_1}{t},s).
\end{equation}
By Sobolev's embedding Lemma, we are left to estimate
\begin{align*}
 \Biggl\|\int_{\mathbb{R}}\eta_1(x_1)\biggl\|\tilde{\rho}(t)\int
_{{\mathbb{R}}^2}e^{iQ_{x_1}(y',t,\xi')}E_{x_1/t}(\delta'_t\xi')\beta(2^{-j}|\delta'_t\xi'|)\\
\quad\quad\quad\times
f(y_1-x_1,\widehat{\xi'})d\xi'
\biggl\|_{L^q([1/2,4]\times \mathbb{R}^2,dtdy')}dx_1
\Biggl\|_{L^q(\mathbb{R}, dy_1)}.
\end{align*}

In order to complete the proof, we mention that Lee \cite{SL2} applied the bilinear method to oscillatory integral operators with variable coefficients, and obtained $L^{p} \rightarrow L^{q}$ regularity properties for a wide class of Fourier integral operators satisfying the "cinematic curvature condition" showed in \cite{mss}.
\begin{thm}(Corollary 1.5 in \cite{SL2})\label{Lee}
Let $\mathcal{F}_{\mu}$ be given by
\begin{equation}
\mathcal{F}_{\mu}f(z) = \int_{\mathbb{R}^{n}}e^{i\phi(z,\xi)}a(z,\xi) \frac{\hat{f}(\xi)}{(1+|\xi|^{2})^{\mu/2}}d\xi, \quad z=(x,t).
\end{equation}
Suppose supp $a(\cdot,\xi)$ is contained in a fixed compact set and suppose that $\phi(\cdot, \xi)$ is a homogeneous function of degree one. For all $(z,\xi) \in $ supp $a$, $\phi$ satisfies
\[\text{rank } \partial^{2}_{z\xi}\phi = n,\]
and
\[\text{rank }\partial^{2}_{\xi\xi} \langle \partial_{z}\phi, \theta \rangle= n-1,\]
provided $\theta \in \mathcal{S}^{n}$ is the unique direction for which $\nabla_{\xi} \langle \partial_{z}\phi, \theta \rangle= 0$, also all non-zero eigenvalues of $\partial^{2}_{\xi\xi} \langle \partial_{z}\phi, \theta \rangle$ have the same sign. Then for $2(n^{2}+2n-1)/(n^{2}-1) \le q \le \infty$, $(n+1)/q \le (n-1)(1-1/p)$, $q \ge p(n+3)/(n+1)$,
\begin{equation}
\|\mathcal{F}_{\mu}f\|_{L^{q}}\leq  C \|f\|_{L^{p}}
\end{equation}
provided $\mu > 1/p - (n+1)/q + (n-1)/2$.
\end{thm}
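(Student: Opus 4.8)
Theorem \ref{Lee} is quoted verbatim from \cite{SL2}, so in this paper I would simply invoke it; but the plan, were one to prove it from scratch, is the bilinear method, and I sketch it here.

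\textbf{Reduction to a frequency shell.} Decompose $\mathcal{F}_\mu f=\sum_{\lambda}\mathcal{F}^\lambda f$ over dyadic $\lambda\ge 1$ with a Littlewood--Paley partition in $\xi$, so that on the shell $|\xi|\approx\lambda$ the factor $(1+|\xi|^2)^{-\mu/2}$ is comparable to $\lambda^{-\mu}$ (the $|\xi|\le 1$ part is trivial). By the triangle inequality and summation of a geometric series, the theorem reduces to showing, for each $\varepsilon>0$,
\[
\|\mathcal{F}^\lambda f\|_{L^q}\lesssim\lambda^{1/p-(n+1)/q+(n-1)/2+\varepsilon}\,\|f\|_{L^p},
\]
where $\mathcal{F}^\lambda$ is the $\mu=0$ operator localized to frequency $\lambda$; indeed $\sum_\lambda\lambda^{-\mu}\cdot\lambda^{1/p-(n+1)/q+(n-1)/2+\varepsilon}$ converges precisely when $\mu>1/p-(n+1)/q+(n-1)/2$. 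One also chops the support of $a$ into finitely many small pieces on each of which $\phi$ is a controlled perturbation of a normal form; the hypotheses $\operatorname{rank}\partial^2_{z\xi}\phi=n$, $\operatorname{rank}\partial^2_{\xi\xi}\langle\partial_z\phi,\theta\rangle=n-1$ and the same-sign condition on the non-zero eigenvalues are exactly what make the wave packets of $\mathcal{F}^\lambda$ behave like those of the extension operator for a cone.

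\textbf{Bilinear reduction and interpolation.} Rescale the shell to frequency $\approx 1$ and apply a Whitney-type decomposition of $\mathbb{R}^n_\xi\times\mathbb{R}^n_\xi$ away from the diagonal into pairs of caps $(\tau,\tau')$ of diameter $\approx\theta$ and separation $\approx\theta$, $\theta$ dyadic. For fixed $\theta$ the bilinear pieces $\mathcal{F}^\lambda f_\tau\cdot\mathcal{F}^\lambda f_{\tau'}$ (here $f_\tau$ is the Fourier restriction of $f$ to $\tau$) have essentially disjoint frequency supports, so a C\'ordoba-type square-function / almost-orthogonality argument reduces $\|\mathcal{F}^\lambda f\|_{L^q}^2=\big\||\mathcal{F}^\lambda f|^2\big\|_{L^{q/2}}$ to a sum over $\theta$ of the single bilinear estimate
\[
\|\mathcal{F}^\lambda f_1\,\mathcal{F}^\lambda f_2\|_{L^{q/2}}\lesssim\lambda^{\sigma}\theta^{\rho}\,\|f_1\|_{L^2}\|f_2\|_{L^2},\qquad \widehat{f_i}\ \text{supported in caps of size}\ \approx\theta\ \text{at distance}\ \approx\theta,
\]
with $\sigma,\rho$ depending on $q$ and $n$. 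This bilinear estimate for variable-coefficient oscillatory integral operators is the core of \cite{SL2}: it is the analogue, for phases satisfying the cinematic curvature condition of \cite{mss}, of the sharp bilinear cone-extension estimate of Tao--Vargas--Vega, Wolff and Tao. Summing in $\theta$ and interpolating the resulting $L^2\times L^2$-based bound against the trivial estimates $L^1\times L^1\to L^\infty$ (from the pointwise kernel bound) and $L^2\times L^2\to L^1$ (from $L^2$-boundedness of $\mathcal{F}^\lambda$) yields the full admissible range, i.e.\ $q\ge 2(n^2+2n-1)/(n^2-1)$, $(n+1)/q\le(n-1)(1-1/p)$, $q\ge p(n+3)/(n+1)$, exactly as in the constant-coefficient case.

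\textbf{Main obstacle.} The difficulty is entirely in the variable-coefficient bilinear estimate: with no exact parabolic rescaling or Lorentz symmetry available one cannot transplant the cone result, and one must instead run an induction on scales — decompose each $\mathcal{F}^\lambda f_i$ into wave packets adapted to the variable phase (tubes that are \emph{curved} rather than straight), use the rank and sign hypotheses to show that packets issued from $\theta$-separated caps are quantitatively transverse and overlap only $O(1)$ times in the relevant regions, and close the induction by pigeonholing over packets together with the $L^2$ bound at the next smaller scale. This is exactly the argument of \cite{SL2}; in the present paper it is simply quoted, and all that remains on our side is to verify that the phase $Q_{x_1}(y',t,\xi')$ arising in the stationary-phase reduction of Theorem \ref{3nonvanish} (and the companion phase entering Theorem \ref{L^(p,q)therom}) satisfies the three hypotheses on the relevant support, which when $2a_2\neq a_3$ follows from $\partial_2^2\Phi(0,0)\neq 0$.
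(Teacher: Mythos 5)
The paper does not prove Theorem~\ref{Lee}; it is quoted as Corollary~1.5 of \cite{SL2} and used as a black box, and you correctly note this at the outset. Your supplementary sketch of Lee's bilinear argument (Littlewood--Paley reduction, Whitney decomposition into $\theta$-separated caps, a variable-coefficient bilinear estimate in the spirit of the cone restriction results, closed by interpolation against the trivial $L^1\times L^1\to L^\infty$ and $L^2$ bounds) is a fair high-level description of the method in \cite{SL2}, but it goes beyond what the paper itself does for this statement, which is simply to cite it.
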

 By a similar argument as in \cite{WL}, Section 5.1.1, we can restrict $y^{^{\prime}}$ in a fixed compact set. It is easy to check that the phase function $Q_{x_1}(y',t,\xi')$ satisfies the  conditions in Theorem \ref{Lee}, the inner $L^{q}$ norm (in which we "freeze" $x_{1}$) can be dominated by Theorem \ref{Lee}. Notice that in our case, the Fourier support of $f$ is contained in the annular $\{\xi^{\prime} \in \mathbb{R}^{2}: |\xi^{\prime}| \sim 2^{j}\}$. Then the proof of Theorem \ref{3nonvanish} will be finished by Young's inequality.

\begin{thm}\label{theocurvanishi}Let $\phi\in C^{\infty}(I)$, where $I$ is a bounded interval containing the origin.
Define the maximal function by
\begin{equation}
\mathcal{M}f(y):=\sup_{t \in [1,2]}\left|\int_{\mathbb{R}^2}f(y-\delta_t(x_1,x_2,x_2^{2}\phi(x_2)))\eta(x)dx\right|,
\end{equation}
where $\eta$ is supported in a sufficiently small neighborhood $U$ of the origin.
Assume that $\phi$ satisfies (\ref{phi}),
and $2a_2=a_3$.  Then for  $ \frac{1}{2p} < \frac{1}{q} \le \frac{1}{p} $, $\frac{1}{q}  > \frac{3}{p} -1$,
there exists a constant
$C_{p,q}$ such that the following inequality holds true:
\begin{equation}
\|\mathcal{M}f\|_{L^{q}}\leq  C_{p,q} \|f\|_{L^{p}}, \hspace{0.5cm}f\in C_0^{\infty}(\mathbb{R}^3).
\end{equation}
\end{thm}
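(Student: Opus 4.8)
The plan is to reduce Theorem \ref{theocurvanishi} to the two-dimensional result Theorem \ref{planetheorem} by exploiting the product-like structure of the surface $(x_1,x_2,x_2^2\phi(x_2))$ in the variable $x_1$, which appears linearly and carries no curvature. First I would, exactly as in the sketch of Theorem \ref{3nonvanish}, majorize $\eta(x)\le \eta_1(x_1)\eta_2(x_2)$, assume $f\ge 0$ and $a_1=1$, and write out the averaging operator. Because the $x_1$-component of $\delta_t(x_1,x_2,x_2^2\phi(x_2))$ is simply $tx_1$ and is independent of $x_2$, Minkowski's inequality in $x_1$ lets me pull the $x_1$-integral outside: the full maximal operator is bounded by $\int_{\mathbb R}\eta_1(x_1)\,\bigl(\sup_{t\in[1,2]}|A_t^{x_1}f(\cdot-tx_1 e_1,\cdot)|\bigr)\,dx_1$, where for each fixed $x_1$ the operator $A_t^{x_1}$ averages only in the $(x_2,x_3)=(y_2,y_3)=y'$ variables along the planar curve $(x_2,x_2^2\phi(x_2))$ with dilations $(t^{a_2},t^{a_3})=(t^{a_2},t^{2a_2})$.

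The second step is to observe that, after the scaling substitution $t\mapsto t^{a_2}$ (legitimate since $t\in[1,2]$ and $a_2>0$, only changing constants), the inner maximal operator in the $y'$ variables is precisely of the form treated in Theorem \ref{planetheorem}: a maximal function along $(x_2,x_2^2\phi(x_2))$ with dilations $(s,s^2)$, $s=t^{a_2}$, and cutoff $\eta_2$ supported near the origin, where $\phi$ satisfies (\ref{phi}). Theorem \ref{planetheorem} therefore gives, uniformly in $x_1$ (the estimate does not see $x_1$ at all), the bound $\|\sup_{t}|A_t^{x_1}g|\|_{L^q(\mathbb R^2_{y'})}\le C_{p,q}\|g\|_{L^p(\mathbb R^2_{y'})}$ for $(1/p,1/q)$ in the stated range. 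Applying this with $g=f(y_1-tx_1,\cdot)$ — here the point is that translation in $y_1$ is an isometry on $L^p(dy')$ for each fixed slice — and then invoking Minkowski's inequality in the $x_1$-variable a second time to move the $L^q(dy_1)$ norm inside, one arrives at
\begin{equation*}
\|\mathcal Mf\|_{L^q(\mathbb R^3)}\le \int_{\mathbb R}\eta_1(x_1)\,\Bigl\|\,\|\sup_t|A_t^{x_1}f(y_1-tx_1,\cdot)|\|_{L^q(dy')}\,\Bigr\|_{L^q(dy_1)}\,dx_1.
\end{equation*}

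The remaining step is a one-dimensional mixed-norm bookkeeping: by Theorem \ref{planetheorem} the inner double norm is dominated by $\|\,\|f(y_1-tx_1,\cdot)\|_{L^p(dy')}\,\|_{L^q(dy_1)}$, and since we are in the regime $q\ge p$, Minkowski's integral inequality (or simply H\"older on the compactly supported $\eta_1$) converts $\|f(\cdot-x_1,\cdot)\|_{L^q_{y_1}L^p_{y'}}$ into a bound by $\|f\|_{L^p(\mathbb R^3)}$, up to the constant $\int\eta_1<\infty$; one must be slightly careful that the translation amount $tx_1$ depends on $t$, but since the $t$-supremum has already been taken inside the $L^p(dy')$ norm before the $L^q(dy_1)$ norm, each $y_1$-fiber is just a translate and the $L^q_{y_1}$ norm is translation-invariant. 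I expect the genuine mathematical content to lie entirely in Theorem \ref{planetheorem}, which has already been established; the main (modest) obstacle here is organizing the order of the norms — taking the $t$-sup and the $y'$-integration \emph{before} the $y_1$-integration — so that the non-curved direction $x_1$ genuinely decouples and contributes only the harmless factor $\int\eta_1(x_1)\,dx_1$, with no loss in the $(1/p,1/q)$ range. A parallel, slightly more technical argument handling the case where $\eta$ is not exactly a tensor product (using the majorant $\eta_1\eta_2$ as above) completes the reduction, exactly along the lines of \cite{WL}, Section 5.1.1.
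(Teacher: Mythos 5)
Your reduction to the planar Theorem \ref{planetheorem} is a reasonable and arguably more modular route than the paper's, which re-derives the stationary-phase, Sobolev embedding, and Fourier-integral-operator machinery directly for the three-dimensional operator and closes via Theorem \ref{L^(p,q)therom} plus Young's inequality. As written, however, your argument has two genuine gaps.

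First, after pulling the $x_1$-integral through the supremum, the inner quantity is $\sup_{t\in[1,2]}\bigl|A^{x_1}_t\bigl[f(y_1-tx_1,\cdot)\bigr]\bigr|$, in which the input $f(y_1-tx_1,\cdot)$ to the planar averaging operator itself changes with $t$. Theorem \ref{planetheorem} bounds $\sup_t|A_t g|$ for a \emph{fixed} function $g$; it cannot be invoked for a $t$-parametrized family of inputs, and the appeal to translation invariance of $L^q(dy_1)$ does not help because the $t$-supremum is taken before any integration in $y_1$ and genuinely couples the $y_1$-shift to the $y'$-dilation. The remedy, visible in the paper in the appearance of $\eta_1(x_1/t)$ next to $f(y_1-x_1,\widehat{\xi'})$ in $\widetilde{A^1_{t,j}}$, is to substitute $u=tx_1$ \emph{before} pulling the $u$-integral out: since $t\in[1,2]$ and $\eta_1\geq 0$, the weight $\frac{1}{t}\eta_1(u/t)$ is dominated by a $t$-independent compactly supported $\tilde{\eta}_1(u)$, and for fixed $y_1,u$ the inner supremum becomes $\sup_{t\in[1,2]}\bigl|\int f(y_1-u,y_2-t^{a_2}x_2,y_3-t^{2a_2}x_2^2\phi(x_2))\eta_2(x_2)\,dx_2\bigr|$, to which Theorem \ref{planetheorem} (after reparametrizing $s=t^{a_2}$, with a finite splitting and rescaling if $a_2>1$) genuinely applies.

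Second, the concluding inequality $\bigl\|\|f(\cdot,\cdot)\|_{L^p(dy')}\bigr\|_{L^q(dy_1)}\leq C\|f\|_{L^p(\mathbb{R}^3)}$ is false for $q>p$: take $f(y_1,y')=k(y_1)h(y')$ with $h\in L^p(\mathbb{R}^2)$ and $k(y_1)=|y_1|^{-\beta}\chi_{[0,1]}(y_1)$ for some $\beta\in(1/q,1/p)$, so $k\in L^p\setminus L^q$ and the left side is infinite while the right side is finite. Compact support of $\eta_1$ cannot repair this, as $\eta_1$ constrains $x_1$, not $y_1$. The correct move is not to push the $L^q(dy_1)$ norm through the $x_1$-integral at all: after the change of variables and the slice-wise application of Theorem \ref{planetheorem}, one obtains $\|\mathcal{M}f(y_1,\cdot)\|_{L^q(dy')}\leq C(\tilde{\eta}_1*H)(y_1)$ with $H(v)=\|f(v,\cdot)\|_{L^p(dy')}$, a one-dimensional convolution in $y_1$, and Young's inequality with $\tilde{\eta}_1\in L^r$, $1/r=1-1/p+1/q\in(0,1]$ (the condition $q\geq p$ is exactly $r\geq 1$), yields $\|\mathcal{M}f\|_{L^q(\mathbb{R}^3)}\leq C\|\tilde{\eta}_1\|_{L^r}\|H\|_{L^p(\mathbb{R})}=C'\|f\|_{L^p(\mathbb{R}^3)}$. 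This Young's inequality step is exactly what the paper invokes at the end of the sketch of Theorem \ref{3nonvanish}. With these two repairs your slice-wise argument does close on the stated range of $(1/p,1/q)$.
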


For the proof of Theorem \ref{theocurvanishi}, we follow the idea in the proof of Theorem \ref{3nonvanish}. First we "freeze" the first variable $x_1$ and apply the method of stationary phase to curves in $(x_2, x_3)-$ plane, then by Sobolev's embedding Lemma, we can reduce to apply $L^p \rightarrow L^{q}$ estimates for certain Fourier integral operators. However,  the phase function in the operators here no longer satisfy the cinematic curvature  condition. Therefore, Theorem \ref{Lee} is not available. Instead, we apply Theorem \ref{L^(p,q)therom} below to finish the proof of Theorem \ref{theocurvanishi}. The details are also omitted since most of them can be found in \cite{WL}, Section 5.1.2.

\section{Proof of Theorem \ref{planetheorem}}
We choose $B>0$ very small and  $\tilde{\rho}\in C_0^{\infty}(\mathbb{R})$ such that  supp $\tilde{\rho}\subset\{x:B/2\leq|x|\leq 2B\}$  and $\sum_k\tilde{\rho}(2^kx)=1$ for $x\in \mathbb{R}$.

Put
\begin{align*}
A_tf(y):&=\int f(y_1-tx,y_2-t^2x^2\phi(x))\eta(x)dx\\
&=\sum_k\int f(y_1-tx,y_2-t^2x^2\phi(x))\tilde{\rho}(2^kx)\eta(x)dx=\sum_kA_t^kf(y),
\end{align*}
where
\begin{equation}
A_t^kf(y):=\int f(y_1-tx,y_2-t^2x^2\phi(x))\tilde{\rho}(2^kx)\eta(x)dx.
\end{equation}

Since $\eta$ is supported in a sufficiently small neighborhood of the origin, we only need to consider $k>0$ sufficiently large.

Considering  isometric operator on $L^p(\mathbb{R}^2)$ defined by $T_kf(x_1,x_2)=2^{3k/p}f(2^kx_1,2^{2k}x_2)$, one can compute that
\begin{equation}
T_k^{-1}A_t^kT_kf(y)=2^{-k}\int f(y_1-tx,y_2-t^2x^2\phi(\frac{x}{2^k}))\tilde{\rho}(x)\eta(2^{-k}x)dx.
\end{equation}

Then it suffices to prove the following estimate
\begin{equation}\label{Target}
\sum_k2^{3k(\frac{1}{p} - \frac{1}{q})-k}\left\|\sup_{t \in [1,2]}|\widetilde{A_t^k}|\right\|_{L^p\rightarrow L^q}\leq C_{p,q}
\end{equation}
for $p,q$ as in Theorem \ref{planetheorem}, where
\begin{equation}
\widetilde{A_t^k}f(y):=\int f(y_1-tx,y_2-t^2x^2\phi(\frac{x}{2^k}))\tilde{\rho}(x)\eta(2^{-k}x)dx.
\end{equation}

By means of the Fourier inversion formula, we have
\begin{align*}
\widetilde{A_t^k}f(y)&=\frac{1}{(2\pi)^2}\int_{{\mathbb{R}}^2}e^{i\xi\cdot y}\int_{\mathbb{R}}e^{-i(t\xi_1x+t^2\xi_2x^2\phi(\frac{x}{2^k}))}\tilde{\rho}(x)\eta(2^{-k}x)dx\hat{f}(\xi)d\xi
\\
&=\frac{1}{(2\pi)^2}\int_{{\mathbb{R}}^2}e^{i\xi\cdot y}\widehat{d\mu_k}(\delta_t\xi)\hat{f}(\xi)d\xi,
\end{align*}
where
\begin{equation}\label{dmu_k}
\widehat{d\mu_k}(\xi):=\int_{\mathbb{R}}e^{-i(\xi_1x+\xi_2x^2\phi(\frac{x}{2^k}))}\tilde{\rho}(x)\eta(2^{-k}x)dx.
\end{equation}

We choose a non-negative function $\beta\in C_0^{\infty}(\mathbb{R})$ such that supp $\beta\subset[1/2,2]$ and $\sum_{j\in\mathbb{Z}}\beta(2^{-j}r)=1$ for $r>0$. Define the dyadic operators
\begin{equation}
\widetilde{A_{t,j}^k}f(y)=\frac{1}{(2\pi)^2}\int_{{\mathbb{R}}^2}e^{i\xi\cdot y}\widehat{d\mu_k}(\delta_t\xi)\beta(2^{-j}|\delta_t\xi|)\hat{f}(\xi)d\xi,
\end{equation}
and denote by $\widetilde{\mathcal{M}_{j}^k}$ the corresponding maximal operator.  Now we have that
\begin{equation*}
\sup_{t \in [1,2]}|\widetilde{A_t^k}f(y)|\leq \widetilde{\mathcal{M}^{k,0}}f(y)+\sum_{j\geq 1}\widetilde{\mathcal{M}_{j}^k}f(y), \hspace{0.2cm}\textmd{for}\hspace{0.2cm}y\in \mathbb{R}^2,
\end{equation*}
where
\begin{equation}
\widetilde{\mathcal{M}^{k,0}}f(y):=\sup_{t \in [1,2]}|\sum_{j\leq 0}\widetilde{A_{t,j}^k}f(y)|.
\end{equation}

We will often use the following method of stationary phase.
\begin{lem} (Theorem 1.2.1 in \cite{sogge2})\label{lem:Lemma1}
Let S be a smooth hypersurface in $\mathbb{R}^n$ with non-vanishing Gaussian curvature and $d\mu$ be the Lebesgue measure on $S$. Then,
\begin{equation}
|\widehat{d\mu}(\xi)|\leq C(1+|\xi|)^{-\frac{n-1}{2}}.
\end{equation}
\end{lem}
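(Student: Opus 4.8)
The plan is to reduce the estimate to the classical method of stationary phase for oscillatory integrals in $n-1$ variables. The bound is trivial when $|\xi|\le 1$, since $d\mu$ is a finite measure; so one may assume $|\xi|=\lambda\ge 1$ and write $\xi=\lambda\omega$ with $\omega\in\mathcal{S}^{n-1}$.

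First I would introduce a smooth partition of unity subordinate to a finite cover of the (compact piece of) hypersurface $S$ by coordinate patches, so that it suffices to bound $\int_S e^{-i\lambda\,\omega\cdot x}\,a(x)\,d\mu(x)$ for each $a\in C_0^\infty$ supported in one small patch. After a rotation, on such a patch $S$ is the graph $x_n=\psi(x')$ of a smooth function with $\nabla\psi(x_0')=0$ at the base point; the hypothesis that the Gaussian curvature does not vanish means precisely that $\nabla^2\psi$ is non-degenerate, and after shrinking the patch one may assume $|\det\nabla^2\psi|$ is bounded away from $0$ there, so that $\nabla\psi$ is a diffeomorphism onto its image. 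Parametrizing by $x'$ turns the piece into $\int_{\mathbb{R}^{n-1}}e^{-i\lambda\Phi_\omega(x')}\,\tilde a(x')\,dx'$ with $\Phi_\omega(x')=\omega'\cdot x'+\omega_n\psi(x')$ and $\tilde a$ absorbing the surface Jacobian.

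Then I would split according to whether $\Phi_\omega$ has a critical point on $\operatorname{supp}\tilde a$. The equation $\nabla_{x'}\Phi_\omega=\omega'+\omega_n\nabla\psi(x')=0$ has at most one solution $x'_c(\omega)$ in the patch, present exactly when $-\omega'/\omega_n$ lies in the (small) range of $\nabla\psi$, which forces $|\omega_n|$ to be bounded below. When there is no critical point one checks that $|\nabla_{x'}\Phi_\omega|\gtrsim 1$ on $\operatorname{supp}\tilde a$, so repeated integration by parts (non-stationary phase) gives $O(\lambda^{-N})$ for every $N$, in particular $O(\lambda^{-(n-1)/2})$. When there is a critical point, $\nabla^2_{x'}\Phi_\omega=\omega_n\nabla^2\psi(x'_c)$ is non-degenerate with $|\det|$ bounded below uniformly in $\omega$, and the classical stationary phase asymptotic in $n-1$ variables yields the bound $O(\lambda^{-(n-1)/2})$, with constant depending only on finitely many $C^k$-norms of $\psi$ and $\tilde a$ and on the lower bound for $|\det\nabla^2\psi|$.

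I expect the main (though routine) obstacle to be the uniformity in $\omega$ in the stationary-phase step: one must check that the implied constant does not degenerate as $x'_c(\omega)$ moves within the patch or as $\omega$ traverses the transition region between the two cases. This is ensured by choosing the patches small enough that $\nabla\psi$ is a bi-Lipschitz diffeomorphism with controlled inverse (so $x'_c(\omega)$ stays in the interior and varies smoothly) and by invoking the quantitative form of stationary phase, whose constants depend only on $C^k$-bounds of the phase and amplitude for some finite $k$; these are bounded uniformly as $\omega$ ranges over the compact sphere $\mathcal{S}^{n-1}$. Summing over the finitely many patches and combining the two cases gives $|\widehat{d\mu}(\xi)|\le C(1+|\xi|)^{-(n-1)/2}$.
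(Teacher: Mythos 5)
Your proposal is a correct and complete proof. Note that the paper itself does not prove this lemma: it is quoted verbatim as Theorem 1.2.1 from Sogge's book \emph{Fourier Integrals in Classical Analysis}, and your argument (localize to graph patches, split according to whether the phase $\omega'\cdot x'+\omega_n\psi(x')$ is stationary, apply non-stationary phase or the uniform stationary phase lemma using non-degeneracy of $\nabla^2\psi$) is precisely the standard route taken in that reference.
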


Note that $\widetilde{\mathcal{M}^{k,0}}f(y)=\sup_{t \in [1,2]}|f*K_{\delta_{t^{-1}}}(y)|$, where $K_{\delta_{t^{-1}}}(x)=t^{-3}K(\frac{x_1}{t},\frac{x_2}{t^2})$ and
\begin{equation}
K(y):=\int_{{\mathbb{R}}^2}e^{i\xi\cdot y}\widehat{d\mu_k}(\xi)\rho(|\xi|)d\xi,
 \end{equation}
where $\rho\in C_0^{\infty}(\mathbb{R})$ is supported in $[0,2]$.  Since  $\phi$ satisfies (\ref{phi}), supp $\tilde{\rho}\subset \{x:B/2\leq |x|\leq 2B\}$,
then Lemma \ref{lem:Lemma1} implies that for a multi-index $\alpha$,
\begin{equation}
\left|\left(\frac{\partial}{\partial\xi}\right)^{\alpha}\widehat{d\mu_k}(\xi)\right|\leq C_{B,\alpha}(1+|\xi|)^{-1/2}.
\end{equation}

By integration by parts, we obtain that
\begin{equation}
|K(y)|\leq C_N (1+|y|)^{-N}.
\end{equation}
Then by $q \ge p$ and Young's inequality, we have
\begin{align}
\|\widetilde{\mathcal{M}^{k,0}}f\|_{L^{q}} &= \|\sup_{t \in [1,2]}|f*K_{\delta_{t^{-1}}}|\|_{L^{q}} \nonumber\\
&\le \biggl\|\frac{C_{N}}{(1+|\cdot|)^{N}} * |f|\biggl\|_{L^{q}} \nonumber\\
&\lesssim \|f\|_{L^{p}}.
\end{align}
 So it suffices to prove that
\begin{equation}\label{loc}
\sum_k2^{3k(\frac{1}{p} - \frac{1}{q})-k}\sum_{j\geq 1}\|\widetilde{\mathcal{M}_{j}^k}\|_{L^p\rightarrow L^q} \leq C_{p,q}.
\end{equation}

In order to get (\ref{loc}), first we will consider
\begin{equation}\label{mainestim}
\widehat{d\mu_k}(\delta_t\xi)=\int_{\mathbb{R}}e^{-it^2\xi_2(-sx+x^2\phi(\delta x))}\tilde{\rho}(x)\eta(\delta x)dx,
\end{equation}
where $2^{-k}=\delta$ and
\begin{equation}
s:=s(\xi,t)=-\frac{\xi_1}{t\xi_2}, \hspace{0.3cm}\textrm{for} \hspace{0.2cm} \xi_2\neq 0.
\end{equation}
If $\xi_2=0$, then
\begin{equation*}
|\widehat{d\mu_k}(\delta_t\xi)|=|(\eta(\delta\cdot)\tilde{\rho})^{\wedge}(t\xi_1)|\leq \frac{C_N'}{(1+|t\xi_1|)^N}=\frac{C_N'}{(1+|\delta_t\xi|)^N},
\end{equation*}
and for multi-index $\alpha$,
\begin{equation*}
|D_{\xi}^{\alpha}\widehat{d\mu_k}(\delta_t\xi)|=|D_{\xi}^{\alpha}(\eta(\delta\cdot)\tilde{\rho})^{\wedge}(t\xi_1)|\leq \frac{C_{\alpha,N}}{(1+|\delta_t\xi|)^N}.
\end{equation*}
Since $t\approx 1$, we will put the case $\xi_2=0$ in $B_k$ of the following (\ref{station}).

Put
\begin{equation}
\Phi(s,x,\delta)=-sx+x^2\phi(\delta x),
\end{equation}
then we have
\begin{equation*}
\partial_x\Phi(s,x,\delta)=-s+2x\phi(\delta x)+x^2\delta\phi'(\delta x)
\end{equation*}
and
\begin{equation*}
\partial_x^2\Phi(s,x,\delta)=2\phi(\delta x)+4x\delta\phi'(\delta x)+x^2\delta^2\phi''(\delta x).
\end{equation*}

Since $k$ is sufficiently large and $\phi(0)\neq 0$, then the implicit function theorem implies that there exists a smooth solution $x_c=\tilde{q}(s,\delta)$ of the equation $\partial_x\Phi(s,x,\delta)=0$. For the sake of simplicity, we may assume $\phi(0)=1/2$. By Taylor's expansion, the phase function can be written as
\begin{equation}\label{phase function}
-t^2\xi_2\tilde{\Phi}(s,\delta)=\frac{\xi_1^2}{2\xi_2}+\delta\frac{\xi_1^3}{t\xi_2^2}\phi'(0) + \delta^2 \xi_{2}R(\frac{\xi_{1}}{\xi_{2}},t,\delta),
\end{equation}
$-t^2\xi_2\tilde{\Phi}(s,\delta)$ can be considered as a small perturbation of  $\frac{\xi_1^2}{2\xi_2}+\delta\frac{\xi_1^3}{t\xi_2^2}\phi'(0)$.

By applying the method of stationary phase, we have
 \begin{equation}\label{station}
\widehat{d\mu_k}(\delta_t\xi)=e^{-it^2\xi_2\tilde{\Phi}(s,\delta)}\chi_k(\frac{\xi_1}{t\xi_2})
\frac{A_k(\delta_t\xi)}{(1+|\delta_t\xi|)^{1/2}}+B_k(\delta_t\xi),
\end{equation}
where $\chi_k$ is a smooth function supported in the interval $[c_k,\tilde{c}_k]$, for certain non-zero positive constants $c_1\leq c_k, \tilde{c_k}\leq c_2$ depending only on $k$. $\{A_k(\delta_t\xi)\}_k$ is contained in a bounded subset of symbols of order zero. More precisely, for arbitrary $t \in [1,2]$,
 \begin{equation}\label{symbol}
 |D_{\xi}^{\alpha}A_k(\delta_t\xi)|\leq C_{\alpha}(1+|\xi|)^{-\alpha},
\end{equation}
where $C_{\alpha}$ is independent of $k$ and $t$. Furthermore, $B_k$ is a remainder term and satisfies for arbitrary $t\in [1,2]$,
\begin{equation}\label{symbol2}
 |D_{\xi}^{\alpha}B_k(\delta_t\xi)|\leq C_{\alpha,N}(1+|\xi|)^{-N},
\end{equation}
where $C_{\alpha,N}$ are admissible constants and again do not depend on $k$ and $t$.

First, let us consider the remainder part of (\ref{loc}). Set
\begin{equation}
M_{j}^{k,0}f(y):=\sup_{t\in[1,2]}\left|\frac{1}{(2\pi)^2}\int_{{\mathbb{R}}^2}e^{i\xi\cdot y}B_k(\delta_t\xi)\beta(2^{-j}|\delta_t\xi|)\hat{f}(\xi)d\xi \right|.
\end{equation}
By (\ref{symbol2}) and integration by parts, it is easy to get $|(B_k\beta(2^{-j}\cdot))^{\vee}(x)|\leq C_N 2^{-jN}(1+|x|)^{-N}$. Therefore,
\begin{align}
M_{j}^{k,0}f(y) &\le \sup_{t\in[1,2]}\frac{C_{N}2^{-jN}}{(2\pi)^2t^{3}}\int_{{\mathbb{R}}^2} \frac{|f(x)|}{(1+|\delta_{t^{-1}}(y-x)|)^{N}} dx \nonumber\\
 &\lesssim 2^{-jN}\int_{{\mathbb{R}}^2} \frac{|f(x)|}{(1+|y-x|)^{N}} dx. \nonumber
\end{align}
Young's inequality and the fact that $3(\frac{1}{p} - \frac{1}{q}) < 1$ imply (\ref{Target}) for remainder part of (\ref{loc}).

Put
\begin{equation}
A_{t,j}^kf(y):=\frac{1}{(2\pi)^2}\int_{{\mathbb{R}}^2}e^{i(\xi \cdot y-t^2\xi_2\tilde{\Phi}(s,\delta))}\chi_k(\frac{\xi_1}{t\xi_2})
\frac{A_k(\delta_t\xi)}{(1+|\delta_t\xi|)^{1/2}}\beta(2^{-j}|\delta_t \xi|)\hat{f}(\xi)d\xi.
\end{equation}
Denote by $M_{j}^{k,1}$ the corresponding maximal operator over $[1,2]$. It remains to prove that
\begin{equation} \label{object}
\sum_k 2^{3k(\frac{1}{p} - \frac{1}{q})-k}\sum_{j\geq 1} \|M_{j}^{k,1}\|_{L^p\rightarrow L^q}\leq C_{p,q,N}.
\end{equation}

Since $\tilde{\Phi}(s,\delta)$ is homogeneous of degree zero in $\xi$ and $\frac{\xi_1}{\xi_2}\approx 1$,
then
$$\left|\nabla_{\xi}[\xi \cdot (y-x)-t^2\xi_2\tilde{\Phi}(s,\delta)]\right|\geq C|y-x|$$
provided $|y-x|\geq L$, where $L$ is very large and determined by $c_1$, $c_2$ and $\|\phi\|_{\infty\hspace{0.1cm} (I)}$. By integration by parts, we will see that the kernel of the operator $A_{t,j}^k$ is dominated by $2^{-jN}\mathcal{O}(|y-x|^{-N})$ if $|y-x|\geq L$.  From now on, we will restrict our view on the situation
\begin{equation}\label{case}
|y-x|\leq L.
\end{equation}

Let $B_i(L)$ be a ball with center $i$ and radius $L$.  It is easy to show that
\begin{equation}\label{local}
\sup\{\|M_{j}^{k,1}f\|_{L^q}:\|f\|_{L^p}=1, \textmd{supp}\hspace{0.2cm} f\subset B_0(L)\}\leq C_{p}2^{-j\tilde{\epsilon}_1(p,q)}2^{k\tilde{\epsilon}_2(p,q)}
\end{equation}
implies that
\begin{equation}\label{local2}
\|M_{j}^{k,1}\|_{L^p\rightarrow L^q}\leq C_{p,q}2^{-j\tilde{\epsilon}_1(p,q)}2^{k\tilde{\epsilon}_2(p,q)},
\end{equation}
where $C_{p,q}$ depends on $p$, $q$, $c_1$, $c_2$ and $\|\phi\|_{\infty\hspace{0.1cm} (I)}$, and $\tilde{\epsilon}_1(p,q)$, $\tilde{\epsilon}_2(p,q)>0$. Then in order to prove inequality (\ref{loc}), it suffices to prove inequality (\ref{local}).

Now we observe inequality (\ref{local}),  together with the assumption (\ref{case}),  we can choose $\rho_1\in C_0^{\infty}(\mathbb{R}^2\times[\frac{1}{2},4])$ such that (\ref{local}) will follow from that
\begin{equation}
\|\widetilde{M_{j}^{k,1}}\|_{L^p\rightarrow L^q}\leq C_{p}2^{-j\tilde{\epsilon}_1(p,q)}2^{k\tilde{\epsilon}_2(p,q)},
\end{equation}
where
\begin{equation}\label{Eq2.28}
\widetilde{M_{j}^{k,1}}f(y):=\sup_{t\in[1,2]}\left|\rho_1(y,t)A_{t,j}^kf(y)\right|.
\end{equation}

\begin{lem}\label{lemmaL^2}
Suppose that $p \le q \le 2p$, $pq \ge 3q-p$. Then for any $\epsilon >0$, we have
\begin{equation}\label{p-q}
\|\widetilde{M_{j}^{k,1}}f\|_{L^{q}}\leq C_{p,q} 2^{j(1+\epsilon)(1/p-1/q)-(j\wedge k)/q}\|f\|_{L^p}.
\end{equation}
\end{lem}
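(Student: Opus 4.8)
The plan is to obtain the estimate in Lemma \ref{lemmaL^2} by combining a fixed-time $L^p\to L^q$ bound for the operators $\rho_1(y,t)A_{t,j}^k$ with a standard Sobolev-type argument that converts the supremum over $t\in[1,2]$ into an integral over $t$. First I would record the one-parameter family $T_t^{j,k}f(y):=\rho_1(y,t)A_{t,j}^kf(y)$, and note that since $\widehat{d\mu_k}(\delta_t\xi)$ contributes the decay factor $(1+|\delta_t\xi|)^{-1/2}\sim 2^{-j/2}$ on the relevant frequency annulus, a trivial $L^1\to L^\infty$ bound gives $\|T_t^{j,k}f\|_\infty\lesssim 2^{-j/2}\cdot 2^{2j}\|f\|_1=2^{3j/2}\|f\|_1$; more usefully I want an $L^2\to L^2$ bound. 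The crucial input here should be an $L^2$ local smoothing / fixed-time estimate for the Fourier integral operator with phase $\xi\cdot y-t^2\xi_2\tilde\Phi(s,\delta)$: because the principal curvature of the associated surface degenerates like a power of $\delta=2^{-k}$, one only gets a decay $2^{-j/2}$ from stationary phase in the $\xi$ variable but with the curvature giving an extra factor that is at worst $2^{(j\wedge k)/?}$-type; after integrating in $t$ over $[1,2]$ and using the standard inequality $\|\sup_t |F(\cdot,t)|\|_q^q\lesssim \|F\|_q^{q-1}\,\|\partial_t F\|_q$ (or its fractional-derivative Sobolev version, which costs a harmless $2^{j\epsilon}$), one upgrades the fixed-$t$ bound to the maximal bound.

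The key steps, in order, would be: (1) For fixed $t$, prove $\|T_t^{j,k}f\|_{L^2}\lesssim 2^{-(j\wedge k)/2}\|f\|_{L^2}$; this is where the amplitude $(1+|\delta_t\xi|)^{-1/2}\beta(2^{-j}|\delta_t\xi|)$ and the degenerate second derivative $\partial_x^2\Phi\sim 1$ on the support of $\chi_k$ but with the cutoff $\chi_k$ living on an interval of length $\sim c_k$ matter — one does an $L^2$ estimate via the kernel $K_t^{j,k}(y,x)$, whose oscillation in the difference variable gives a gain until the frequency localization $2^j$ is balanced against the $2^{-k}$-scale of the perturbation, yielding the $2^{-(j\wedge k)/2}$. (2) Interpolate this $L^2\to L^2$ bound with the elementary $L^1\to L^\infty$ bound $2^{3j/2}$ to get, for $1\le p\le 2\le q\le\infty$ with the stated exponent relations, a fixed-$t$ estimate $\|T_t^{j,k}f\|_{L^q}\lesssim 2^{j(\cdots)-(j\wedge k)/q}\|f\|_{L^p}$; tracking exponents carefully under the hypotheses $p\le q\le 2p$, $pq\ge 3q-p$ gives the exponent $j(1+\epsilon)(1/p-1/q)$ on the $2^j$-power after step (3). (3) Pass from fixed $t$ to $\sup_{t\in[1,2]}$: write $\rho_1(y,t)A_{t,j}^kf(y)$, observe $\partial_t$ of this operator only costs a factor $2^{j}$ (since each $t$-derivative pulls down a factor comparable to $|\delta_t\xi|\sim 2^j$ from the phase, plus bounded contributions from $\rho_1$ and the symbol), and apply $\|\sup_t|F|\|_q \le \big(\|F\|_q^{q-1}\|F\|_q + \|F\|_q^{q-1}2^{-j}\cdot 2^j\|F\|_q\big)^{1/q}$-type reasoning, or more cleanly the fractional Sobolev embedding $\|\sup_t|F(t)|\|_q\lesssim \|F\|_{L^q_t}^{1-1/q}\|\partial_t F\|_{L^q_t}^{1/q}$, which introduces at most $2^{j\epsilon}$ loss and hence the $(1+\epsilon)$ in the exponent. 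Combining (2) and (3) gives \eqref{p-q}.

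The main obstacle I expect is step (1): producing the sharp fixed-time $L^2$ bound with the gain $2^{-(j\wedge k)/2}$ rather than just $2^{-j/2}$ or nothing. When $k\le j$, the perturbation terms $\delta\frac{\xi_1^3}{t\xi_2^2}\phi'(0)$ and $\delta^2\xi_2 R$ in \eqref{phase function} are not negligible at frequency $2^j$ — indeed $\delta^2 2^j=2^{j-2k}$ can be large for $k<j/2$ — so the surface is genuinely curved and one should actually gain \emph{more} than $2^{-j/2}$; conversely when $k> j$ the perturbation is tiny, the surface is essentially flat (a parabola frozen at the origin), and one gains only $2^{-j/2}$ from the one nondegenerate direction but the $\chi_k$-localization to an interval of size $\sim 2^{-?}$ in $s$ costs something. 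Reconciling these regimes so that the clean bound $2^{-(j\wedge k)/q}$ drops out requires carefully exploiting that $A_k$ is a symbol of order zero \emph{uniformly in $k$} (inequality \eqref{symbol}) together with the explicit form of the phase; this is precisely the ``fails the cinematic curvature condition uniformly'' phenomenon emphasized in the introduction, and it is what prevents a direct citation of Theorem \ref{Lee} or of \cite{mss}. A secondary technical point is bookkeeping the exponent arithmetic so that the hypotheses $p\le q\le 2p$ and $pq\ge 3q-p$ are exactly what is needed for the interpolation in step (2) to close with the stated exponent on $2^j$; I would verify the two boundary cases $q=p$ and $q=2p$ by hand and interpolate.
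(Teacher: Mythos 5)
Your proposal takes a genuinely different route from the paper's, and unfortunately it contains a factor-counting error that is fatal to the approach: in your step (3) you invoke the Sobolev-in-$t$ inequality $\|\sup_t|F(\cdot,t)|\|_q\lesssim \|F\|_{L^q_t}^{1-1/q}\|\partial_t F\|_{L^q_t}^{1/q}$ and assert the loss is ``at most $2^{j\epsilon}$.'' But on the frequency annulus $|\delta_t\xi|\sim 2^j$ a single $t$-derivative of the phase costs a factor $\sim 2^j$, so this inequality loses a factor $2^{j/q}$ (or $2^{j(1/q+\epsilon)}$ in the fractional variant), not $2^{j\epsilon}$. This is not cosmetic. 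Take the $L^2\to L^2$ endpoint: the fixed-$t$ bound is $\|T^{j,k}_t f\|_2\lesssim 2^{-j/2}\|f\|_2$ by Plancherel (the amplitude $\widetilde{A_k}\beta(2^{-j}|\delta_t\cdot|)$ is $O(2^{-j/2})$, and this is sharp), and after your Sobolev step you get $2^{j/2}\cdot 2^{-j/2}=1$, i.e.\ no decay at all in $j$ or $k$, whereas the lemma claims $2^{-(j\wedge k)/2}$. The gain by $2^{-(j\wedge k)/q}$ is a local smoothing phenomenon for the family $t\mapsto T^{j,k}_t$, and it simply cannot be recovered from a fixed-$t$ bound plus the naive Sobolev trade-off. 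You also misidentify the difficulty in your step (1): the fixed-$t$ $L^2\to L^2$ bound is the trivial $2^{-j/2}$, which is \emph{stronger} than the $2^{-(j\wedge k)/2}$ you set out to prove there; the hard part is the maximal (supremum-in-$t$) $L^2$ bound, and that is exactly what your Sobolev step is supposed to produce but cannot.

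The paper's actual proof makes no fixed-$t$ estimate and does no Sobolev-in-$t$ at this point. It (a) proves $\|\widetilde{M^{k,1}_j}f\|_{L^\infty}\lesssim 2^{j(1+\epsilon)}\|f\|_{L^1}$ \emph{for the maximal operator directly}, by decomposing $\xi$-space into $\sim 2^{j/2}$ angular sectors $\Gamma_j^\nu$ of aperture $2^{-j/2}$, showing each sector kernel $K^\nu_t(y,x)$ is $O(2^j)$ and localized to a dual $2^{-j/2}\times 2^{-j}$ box centered at $(c_1(y,t,\kappa_j^\nu,\delta),c_2(\ldots))$, and then using the $2^{-j/2}$-separation of the centers $c_1$ in $\nu$ to show that, for each fixed $(y,t,x)$, only $O(2^{\epsilon j})$ sectors contribute non-negligibly, whence $\sum_\nu|K^\nu_t(y,x)|\lesssim 2^{j(1+\epsilon)}$; (b) imports from Lemma~2.7 of \cite{WL} the maximal-operator bound $\|\widetilde{M^{k,1}_j}f\|_{L^p}\lesssim 2^{-(j\wedge k)/p}\|f\|_{L^p}$ for all $2\le p\le\infty$ (this is where all the local smoothing lives, as a black box); (c) interpolates between (a) and the one-parameter family in (b). The interpolation region is exactly the triangle with vertices $(0,0)$, $(1/2,1/2)$, $(2/5,1/5)$ described by $p\le q\le 2p$, $pq\ge 3q-p$, and the exponent tracks to $j(1+\epsilon)(1/p-1/q)-(j\wedge k)/q$, as claimed. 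So the two essential ingredients you would need to supply to make any proof work are (i) the maximal, not fixed-time, $L^p\to L^p$ local smoothing estimate, and (ii) the $L^1\to L^\infty$ maximal kernel estimate via angular decomposition; your outline contains neither, and the Sobolev substitute you propose for (i) loses the crucial $2^{-(j\wedge k)/q}$.
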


\begin{proof}
In order to prove inequality (\ref{p-q}), we first show
\begin{equation}\label{1-infty}
\|\widetilde{M_{j}^{k,1}}f\|_{L^ \infty }\leq C_{\epsilon} 2^{j(1+\epsilon)}\|f\|_{L^1}.
\end{equation}
We introduce the angular decomposition of the set $\{\xi \in \mathbb{R}^{2}: \frac{\xi_1}{\xi_2}\approx 1\}$.  For each positive integer $j$, we consider a roughly equally spaced set of points with grid length $2^{-j/2}$ on the unit circle $S^1$; that is, we fix a collection $\{\kappa_{j}^{\nu}\}_{\nu}$ of real numbers, that satisfy:

$(a)$ $|\kappa_j^{\nu}-\kappa_j^{\nu'}|\geq 2^{-j/2}$, if $\nu\neq\nu'$;

$(b)$ if $\xi\in \{\xi \in \mathbb{R}^{2}: \frac{\xi_1}{\xi_2}\approx 1\}$, then there exists a $\kappa_j^{\nu}$ so that $\biggl|\frac{\xi_1}{\xi_2}-\kappa_j^{\nu}\biggl|<2^{-j/2}$.

Let $\Gamma_j^{\nu}$ denote the corresponding cone in the $\xi$-space
\begin{equation*}
\Gamma_j^{\nu}=\{\xi \in \mathbb{R}^{2}: \biggl|\frac{\xi_{1}}{\xi_{2}}-\kappa_j^{\nu}\biggl|\leq2\cdot 2^{-j/2}\}.
\end{equation*}
We can construct an associated partition of unity:
$\chi_j^{\nu}$ is  homogeneous of degree zero in $\xi$ and supported in $\Gamma_j^{\nu}$, with
\begin{equation}\label{anghomofunct1}
\sum_{\nu}\chi_j^{\nu}(\xi)=1 \hspace{0.5cm}\textrm{for}\hspace{0.2cm}\textrm{ all}\hspace{0.2cm} \xi\in \{\xi \in \mathbb{R}^{2}: \frac{\xi_1}{\xi_2}\approx 1\} \hspace{0.2cm}\textrm{and}\hspace{0.2cm} \textrm{all}\hspace{0.2cm} j,
\end{equation}
and
\begin{equation}\label{anghomofunct2}
|\partial_{\xi}^{\alpha}\chi_j^{\nu}(\xi)|\leq A_{\alpha}2^{|\alpha|j/2}|\xi|^{-|\alpha|}.
\end{equation}

Hence, in order to establish (\ref{1-infty}), notice that
\begin{align}
\|\widetilde{M_{j}^{k,1}}f\|_{L^{\infty}} &= \biggl \|\sup_{t \in [1,2]}\biggl |\int_{\mathbb{R}^{2}} K_{t}(y,x)f(x)dx \biggl |\biggl \|_{L^{\infty}} \nonumber\\
 &\le \biggl \|\sup_{t \in [1,2]}\int_{\mathbb{R}^{2}} \sum_{\nu}{ |K_t^{\nu}(y,x)|}|f(x)|dx \biggl \|_{L^{\infty}},
\end{align}
where
\begin{equation}\label{ker1-1}
K_t(y,x)=\rho_1(y,t)\int_{{\mathbb{R}}^2}e^{i(\xi \cdot (y-x)-t^2\xi_2\tilde{\Phi}(s,\delta))}\widetilde{A_k}(\xi,t)\beta(2^{-j}|\delta_t \xi|)d\xi,
\end{equation}
and
\begin{equation}\label{ker1}
K_t^{\nu}(y,x)=\rho_1(y,t)\int_{{\mathbb{R}}^2}e^{i(\xi \cdot (y-x)-t^2\xi_2\tilde{\Phi}(s,\delta))}\widetilde{A_k}(\xi,t)\beta(2^{-j}|\delta_t \xi|)\chi_j^{\nu}(\xi)d\xi,
\end{equation}
and $\widetilde{A_k}(\xi,t)=\chi_k(\frac{\xi_1}{t\xi_2})
\frac{A_k(\delta_t\xi)}{(1+|\delta_t\xi|)^{1/2}}$. If we can show that
for fixed  $y$, $t$ and $\epsilon >0$, we have
\begin{equation}\label{sum bound}
\sum_{\nu}{ |K_t^{\nu}(y,x)|} \le 2^{(1+\epsilon)j},
\end{equation}
uniformly for $x \in \mathbb{R}^{2}$, then inequality (\ref{1-infty}) follows.

In fact, it is not hard to check that for each $\nu$,
\begin{align}\label{kernalL^infty}
|K_t^{\nu}(y,x)| \leq 2^{j} \frac {C _{N}}{(1+ 2^{\frac{j}{2}}|x_{1}-c_{1}(y,t,\kappa_{j}^{\nu},\delta)|  + 2^{j}|x_{2} +\kappa_{j}^{\nu}x_{1}-c_{2}(y,t,\kappa_{j}^{\nu},\delta)|   )^{N}}, \nonumber
\end{align}
where $C$ does not depend on $t$, $j$, $k$ and $\nu$,
\[c_{1}(y,t,\kappa_{j}^{\nu},\delta) = \kappa_{j}^{\nu}+y_{1}+\delta (\partial_{1}\bar{R})(\kappa_{j}^{\nu},t,\delta),\]
\[c_{2}(y,t,\kappa_{j}^{\nu},\delta) =y_{2} +\kappa_{j}^{\nu}y_{1} +\frac{1}{2}(\kappa_{j}^{\nu})^{2}+\delta \bar{R}(\kappa_{j}^{\nu},t,\delta),\]
here $\bar{R}(\frac{\xi_{1}}{\xi_{2}},t,\delta) =\frac{\xi_1^3}{t\xi_2^2}\phi'(0) + \delta \xi_{2}R(\frac{\xi_{1}}{\xi_{2}},t,\delta)$.

For fixed $y$, $t$, $j$, $k$, if one of
\[|x_{1}-c_{1}(y,t,\kappa_{j}^{\nu},\delta)| \ge 2^{-\frac{j}{2}+\epsilon j},\]
and
\[|x_{2} + \kappa_{j}^{\nu}x_{1}-c_{2}(y,t,\kappa_{j}^{\nu},\delta)| \ge 2^{-j+\epsilon j}\]
holds true for some $\epsilon >0$, then we have
\[|K_t^{\nu}(y,x)| \leq C_{N}2^{-\epsilon Nj}.\]
 Inequality (\ref{sum bound}) follows  since $N$ can be sufficiently large. Therefore, we only need to consider the case when $(x_{1}, x_{2})$ satisfies
\[|x_{1}-c_{1}(y,t,\kappa_{j}^{\nu},\delta)| \le 2^{-\frac{j}{2}+\epsilon j},\]
\[|x_{2} + \kappa_{j}^{\nu}x_{1}-c_{2}(y,t,\kappa_{j}^{\nu},\delta)| \le 2^{-j+\epsilon j}.\]
It is obvious that for fixed $y$, $t$, $j$, $k$, if $\nu\neq\nu'$,
\[|c_{1}(y,t,\kappa_{j}^{\nu},\delta) -c_{1}(y,t,\kappa_{j}^{\nu'},\delta)| \ge 2^{-j/2} ,\]
which implies inequality (\ref{sum bound}).

By Lemma 2.7 in \cite{WL}, we have
\begin{equation}\label{p-p}
\|\widetilde{M_{j}^{k,1}}f\|_{L^p}\leq C_{p} 2^{- (j \wedge k)/p}\|f\|_{L^p}, \hspace{0.5cm}2\leq p\leq \infty.
\end{equation}
Then inequality (\ref{p-q}) follows from  the M. Riesz interpolation theorem between (\ref{1-infty}) and (\ref{p-p}).
\end{proof}

Now we split the set of $j$ into two parts $j>2k$ and $j\leq 2k$. When $j\leq 2k$,
by inequality (\ref{p-q}), if $p < q \leq 2p$,  $pq \ge 3q-p$, $p > 5/2$, then for any $\epsilon >0$, we get
\begin{align}\label{jle2k}
\sum_k 2^{3k(\frac{1}{p} - \frac{1}{q})-k} \sum_{j\leq 2k}\|\widetilde{M_{j}^{k,1}}f\|_{L^q} &\leq C_{p,q} \sum_k 2^{5k(\frac{1}{p} - \frac{1}{q})-k +\epsilon k} \|f\|_{L^p} \nonumber\\
&\leq C_{p,q} \sum_k 2^{\frac{5}{2p}k-k + \epsilon k} \|f\|_{L^p} \nonumber\\
&\leq C_{p,q} \|f\|_{L^p}.
\end{align}
Next we consider the case $j>2k$, in which the following lemma is required.

\begin{lem} (Theorem 2.4.2 in \cite{sogge2})\label{lem:Lemma3}
Suppose that $F$ is $C^1(\mathbb{R})$. Then if $p>1$ and $1/p+1/p'=1$,
\begin{equation*}
\sup_{\lambda}|F(\lambda)|^p\leq |F(0)|^p+p\biggl(\int|F(\lambda)|^pd\lambda\biggl)^{1/p'}\biggl(\int|F'(\lambda)|^pd\lambda\biggl)^{1/p}.
\end{equation*}
\end{lem}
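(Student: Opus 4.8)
The plan is to derive this one-dimensional Sobolev-type embedding directly from the fundamental theorem of calculus followed by H\"older's inequality. First I would record the elementary fact that, because $p>1$, the map $t\mapsto|t|^p$ is continuously differentiable on $\mathbb{R}$ with derivative $p|t|^{p-1}\operatorname{sgn}(t)$; hence $s\mapsto|F(s)|^p$ is $C^1$ whenever $F\in C^1(\mathbb{R})$, and
\[
\Bigl|\tfrac{d}{ds}|F(s)|^p\Bigr| = p\,|F(s)|^{p-1}\,|F'(s)|.
\]
Consequently, for every $\lambda\in\mathbb{R}$,
\[
|F(\lambda)|^p = |F(0)|^p + \int_0^\lambda \tfrac{d}{ds}|F(s)|^p\,ds \le |F(0)|^p + p\int_{\mathbb{R}} |F(s)|^{p-1}|F'(s)|\,ds .
\]

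Next I would estimate the last integral by H\"older's inequality with the conjugate exponents $p'$ and $p$; since $(p-1)p'=p$ this gives
\[
\int_{\mathbb{R}} |F(s)|^{p-1}|F'(s)|\,ds \le \Bigl(\int_{\mathbb{R}} |F(s)|^{(p-1)p'}ds\Bigr)^{1/p'}\Bigl(\int_{\mathbb{R}} |F'(s)|^{p}ds\Bigr)^{1/p} = \Bigl(\int_{\mathbb{R}} |F(s)|^{p}ds\Bigr)^{1/p'}\Bigl(\int_{\mathbb{R}} |F'(s)|^{p}ds\Bigr)^{1/p}.
\]
Combining the two displays and then taking the supremum over $\lambda\in\mathbb{R}$ yields the asserted inequality. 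If either integral on the right-hand side is infinite the estimate is trivially true, so one may assume both are finite.

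There is no serious obstacle here; the only point requiring a little care is the differentiability of $|F|^p$ at the zeros of $F$, which is exactly why the hypothesis is $p>1$ rather than $p\ge1$: at a zero $s_0$ of $F$ one has $|F(s)|^p=o(|s-s_0|)$, so $\tfrac{d}{ds}|F(s)|^p$ vanishes there and the bound $\bigl|\tfrac{d}{ds}|F(s)|^p\bigr|\le p|F(s)|^{p-1}|F'(s)|$ remains valid (read as $0=0$). Alternatively one can sidestep this entirely by replacing $|F|$ with $(|F|^2+\varepsilon^2)^{1/2}$, which is smooth, running the identical argument, and letting $\varepsilon\to0^+$ using dominated convergence.
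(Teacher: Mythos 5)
Your proof is correct and is essentially the standard textbook argument (fundamental theorem of calculus applied to $|F|^p$ followed by H\"older with conjugate pair $(p',p)$), which is also how Sogge proves Theorem 2.4.2; the paper itself does not reprove the lemma but simply cites it. Your remark on the differentiability of $|F|^p$ at zeros of $F$ correctly identifies the only point where $p>1$ is used.
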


For $j> 2k$, by Lemma \ref{lem:Lemma3},
\begin{equation}\label{well}
\begin{aligned}
&\|\widetilde{M_{j}^{k,1}}f\|_{L^q}\\
&\leq C
2^{\frac{j}{q} -\frac{k}{q} -\frac{j}{2}}\biggl(\int_{{\mathbb{R}}^2}\int^4_{1/2}\biggl|\rho_1(y,t)\int_{{\mathbb{R}}^2}e^{i(\xi \cdot y-t^2\xi_2\tilde{\Phi}(s,\delta))}2^{j/2}\widetilde{A_k}(\xi,t)\beta(2^{-j}|\delta_t \xi|)\hat{f}(\xi)d\xi\biggl|^qdtdy \biggl)^{
(1-1/q)\times 1/q}\\
& \quad \times
\biggl(\int_{{\mathbb{R}}^2}\int^4_{1/2}\biggl|\frac{\partial}{\partial t}(\rho_1(y,t)\int_{{\mathbb{R}}^2}e^{i(\xi \cdot y-t^2\xi_2\tilde{\Phi}(s,\delta))}2^{k-j/2}\widetilde{A_k}(\xi,t)\beta(2^{-j}|\delta_t \xi|)\hat{f}(\xi)d\xi)\biggl|^qdtdy \biggl)^{1/q^{2}}.
\end{aligned}
\end{equation}

In order to simplify the notations, we choose $\tilde{\chi}\in C_0^{\infty}([c_1,c_2])$ so that  $\tilde{\chi}(\frac{\xi_1}{\xi_2})\chi(\frac{\xi_1}{t\xi_2})=\chi(\frac{\xi_1}{t\xi_2})$
 for arbitrary $t\in [1/2,4]$ and $k$ sufficiently large. In a similar way we choose $\rho_0\in C_0^{\infty}((-10,10))$ such that $\rho_0(|\xi|)\beta(|\delta_t \xi|)=\beta(|\delta_t \xi|)$ for arbitrary $t\in [1/2,4]$. Furthermore, since $A_k$ satisfies (\ref{symbol}), if  $a(\xi,t):=2^{j/2}\widetilde{A}_k(\xi,t)\beta(2^{-j}|\delta_t \xi|)$ for $k$ sufficiently large, then $a(\xi,t)$ is a symbol of order zero, i.e. for any $t\in [1/2,4]$, $\alpha\in {\mathbb{N}}^2$,
\begin{equation}\label{symbola}
\left|\left(\frac{\partial}{\partial \xi}\right)^{\alpha}a(\xi,t)\right|\leq C_{\alpha}(1+|\xi|)^{-|\alpha|}.
\end{equation}

Assume that we have obtained the following theorem, the proof will be found later.
\begin{thm}\label{L^(p,q)therom}
For all $j > 2k$, and $p,q$ satisfying $\frac{1}{2p}< \frac{1}{q} \le  \frac{3}{5p}$, $\frac{3}{q} \le 1-\frac{1}{p} $, $\frac{1}{q} \ge \frac{1}{2} -\frac{1}{p} $, we have
\begin{equation}
\biggl(\int_{{\mathbb{R}}^2}\int^4_{1/2}|\tilde{F}_{j}^{k}f(y,t)|^qdtdy \biggl)^{1/q}
\leq C 2^{\frac{k}{2}(1-\frac{1}{p}-\frac{1}{q})} 2^{(\frac{1}{2} -\frac{3}{q}+ \frac{1}{p} +\epsilon)j} \|f\|_{L^p({\mathbb{R}}^2)},\hspace{0.2cm}\textmd{some }\hspace{0.2cm}\epsilon >0,
\end{equation}
where
\begin{equation}\label{estimate:L4}
 \tilde{F}_{j}^{k}f(y,t)=\rho_1(y,t)\int_{{\mathbb{R}}^2}e^{i(\xi \cdot y-t^2\xi_2\tilde{\Phi}(s,\delta))}a(\xi,t)\rho_0(2^{-j}|\xi|)\tilde{\chi}(\frac{\xi_1}{\xi_2})\hat{f}(\xi)d\xi.
\end{equation}
\end{thm}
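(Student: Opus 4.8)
The plan is to treat $\tilde F_j^k$ as a Fourier integral operator whose associated canonical relation we understand explicitly, perform a Whitney-type decomposition of the frequency support into pairs of angular sectors $(\nu,\nu')$ of width $2^{-j/2}$ that are \emph{separated}, and on each such pair invoke a bilinear oscillatory-integral estimate of Lee \cite{SL2}. The key structural point, which we already glimpsed in the proof of Lemma \ref{lemmaL^2}, is that the phase
\[
\Psi(y,t,\xi)=\xi\cdot y - t^2\xi_2\tilde\Phi(s,\delta),\qquad s=-\frac{\xi_1}{t\xi_2},
\]
has the form $\xi_1 y_1 + \xi_2 y_2 + \tfrac{\xi_1^2}{2\xi_2} + \delta\,\xi_2\bar R(\xi_1/\xi_2,t,\delta)$, i.e. it is a $\delta$-perturbation of the model phase of the parabola; consequently the mixed Hessian $\partial^2_{y\xi}\Psi$ has full rank $2$, and the cone generated by $\nabla_y\Psi$ as $\xi$ ranges over a sector is curved with curvature comparable to a fixed positive constant \emph{up to a factor that degenerates like $\delta=2^{-k}$}. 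This is exactly the failure of the cinematic curvature condition uniformly in $k$; but because here $j>2k$, i.e. $\delta \ge 2^{-j/2}$, the curvature is bounded below by $2^{-j/2}$ relative to the frequency scale $2^j$, and this is precisely the regime in which a rescaled version of the bilinear machinery applies.

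First I would reduce to a dyadic frequency piece: by homogeneity and the cutoffs $\rho_0(2^{-j}|\xi|)\tilde\chi(\xi_1/\xi_2)$, write $\xi = 2^j\zeta$ with $|\zeta|\sim 1$, $\zeta_1/\zeta_2\sim 1$, absorbing the Jacobian $2^{2j}$ and the symbol bound (\ref{symbola}) into constants. Next, introduce the angular partition of unity $\{\chi_j^\nu\}$ from the proof of Lemma \ref{lemmaL^2} (grid length $2^{-j/2}$, derivative bounds (\ref{anghomofunct2})), decompose $f = \sum_\nu f_\nu$ with $\hat f_\nu = \chi_j^\nu \hat f$, and split
\[
|\tilde F_j^k f|^2 \le \Big(\sum_\nu |\tilde F_j^k f_\nu|\Big)^2 \lesssim \sum_{\mathrm{dist}(\nu,\nu')\lesssim 1} |\tilde F_j^k f_\nu|\,|\tilde F_j^k f_{\nu'}| + \sum_{\nu} |\tilde F_j^k f_\nu|^2,
\]
where the first sum runs over separated (Whitney) pairs and the second, "diagonal", sum is handled by orthogonality together with the single-sector $L^p\to L^q$ bound that one gets by the stationary-phase kernel estimate of Lemma \ref{lemmaL^2} (the sector pieces have essentially disjoint outputs in $y$ for fixed $t$). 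For the separated pairs I would rescale the pair of sectors to unit separation — this is where the curvature lower bound $2^{-j/2}$ enters, producing the loss $2^{(1/2 - 3/q + 1/p + \epsilon)j}$ and the factor $2^{\frac k2(1-1/p-1/q)}$ after undoing the rescaling and accounting for the $\delta$-dependence of $\bar R$ — and apply the variable-coefficient bilinear oscillatory integral estimate from \cite{SL2} (the bilinear analogue underlying Theorem \ref{Lee}), checking that the rescaled phase still satisfies the requisite nondegeneracy (full-rank mixed Hessian, one-dimensional $\partial^2_{\xi\xi}\langle\partial_y\Psi,\theta\rangle$ with a sign condition) \emph{uniformly} after rescaling. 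Summing the bilinear bounds over Whitney pairs via Cauchy--Schwarz and almost-orthogonality in $\nu$ recovers the claimed $L^q$ bound in the stated range $\tfrac1{2p}<\tfrac1q\le\tfrac3{5p}$, $\tfrac3q\le 1-\tfrac1p$, $\tfrac1q\ge\tfrac12-\tfrac1p$.

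The main obstacle I anticipate is the bookkeeping of the two independent scaling parameters $2^{-j}$ (frequency) and $\delta=2^{-k}$ (curvature degeneracy), and in particular verifying that after the anisotropic rescaling that normalizes a separated pair of $2^{-j/2}$-sectors, the perturbation term $\delta\,\xi_2 R(\xi_1/\xi_2,t,\delta)$ together with all its $\xi$- and $t$-derivatives remains a bounded perturbation that does not destroy the hypotheses of the bilinear estimate; since $j>2k$ gives $\delta\ge 2^{-j/2}$, the perturbation is of size at most $O(1)$ in the rescaled variables and its contribution to the curvature is controlled from below, but tracking the exact powers of $2^k$ that this produces (to land on the exponent $\tfrac k2(1-1/p-1/q)$) requires care. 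A secondary technical point is the treatment of the $t$-derivative term in (\ref{well}) and the localization $\rho_1(y,t)$: differentiating in $t$ costs a factor $2^{j/2}$ but is already accounted for in the statement through the structure of (\ref{well}), so it suffices to prove the fixed-operator bound and insert it; I would carry the $\partial_t$ through the same decomposition, noting $\partial_t \Psi$ is again homogeneous of degree one in $\xi$ with the same sectorial structure.
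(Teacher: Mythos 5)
Your broad strategy — Whitney decomposition of the angular variable plus a bilinear oscillatory integral estimate from \cite{SL2} — is indeed the paper's approach, but you choose the wrong critical angular scale, and this is not a bookkeeping detail: it is where the degeneracy of the curvature by the factor $\delta=2^{-k}$ actually enters. You propose decomposing at fixed width $2^{-j/2}$ (reusing the $\{\chi_j^\nu\}$ from Lemma \ref{lemmaL^2}), whereas the paper performs a genuine multi-scale Whitney decomposition over angular widths $2^{-l}$ with $l$ ranging from $(j-k)/2$ (not $j/2$) up to $\log(1/\epsilon_0)$. The scale $2^{-(j-k)/2}$ is the correct threshold: with frequency $\sim 2^j$ and curvature $\sim 2^{-k}$, the anisotropic rescaling that normalizes a separated pair of $2^{-l}$-sectors produces an operator at effective frequency $2^{j-k-2l}$, so the bilinear estimate (Theorem 1.2 of \cite{SL2}, applied after rescaling to $2^{-l-k}\times 2^{-2l-k}$ rectangles) only gives something useful when $2l < j-k$. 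Pairs at separation between $2^{-j/2}$ and $2^{-(j-k)/2}$ — which your single-scale decomposition lumps together with the genuinely transverse ones — are \emph{not} transverse enough for the bilinear machinery, and your proposal does not say what to do with them. Likewise, your treatment of the ``diagonal'' sum $\sum_\nu|\tilde F_j^k f_\nu|^2$ via ``orthogonality and a single-sector $L^p\to L^q$ bound'' is unsubstantiated: the paper instead isolates the piece at $l\approx (j-k)/2$ as Lemma \ref{basic} and bounds it by elementary $L^2$, $L^\infty$, $L^1$-kernel estimates and interpolation, which is what produces the factor $2^{k(1-1/p-1/q)}2^{(-3/q-1/p)j}$ that must match the off-diagonal contribution under the constraint $3/q\le 1-1/p$.

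Two further gaps. First, you do not address how to pass from the local bilinear estimate (which Lee's theorem gives on a unit cube, hence here on $2^{-l-k}\times 2^{-2l-k}$ rectangles after rescaling) to the global $L^{q/2}(\mathbb{R}^3)$ bound; the paper does this via kernel localization (integration by parts shows each $K_i(x,z,t)$ is concentrated on such a rectangle centered at $(z_1 2^{-(j-l)}, z_2 2^{-j})$) and a tiling/almost-orthogonality argument (equation (\ref{sum})). Second, the ``almost-orthogonality in $\nu$'' you invoke to sum the Whitney pairs has to be carried out across all scales $l$ with the correct $\ell^{(q/2)'}$ summation (inequality (\ref{Eq3.2})), which is where the constraint $1/q\ge 1/2-1/p$ is actually used; a Cauchy--Schwarz at a single angular scale would not give this. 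In short, the right idea is present, but the two-parameter scaling $(2^{-j},2^{-k})$ forces the angular scales to run down only to $2^{-(j-k)/2}$, and both the smallest-scale diagonal and the globalization need their own arguments, none of which are supplied.
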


By (\ref{well}), we obtain
\begin{align*}
\|\widetilde{M_{j}^{k,1}}f\|_{L^q({\mathbb{R}}^2)}&\leq C 2^{\frac{j}{q} -\frac{k}{q} -\frac{j}{2}}2^{\frac{k}{2}(1-\frac{1}{p}-\frac{1}{q})}  2^{(\frac{1}{2} -\frac{3}{q}+ \frac{1}{p} +\epsilon)j} \|f\|_{L^p({\mathbb{R}}^2)}\\
&=C 2^{\frac{k}{2}(1-\frac{1}{p}-\frac{3}{q})} 2^{(\frac{1}{p} -\frac{2}{q}  +\epsilon)j} \|f\|_{L^p({\mathbb{R}}^2)}
\end{align*}
for $p,q$ satisfying conditions in Theorem \ref{L^(p,q)therom}. Then we have
\begin{align*}
\sum_k 2^{3k(\frac{1}{p} - \frac{1}{q})-k}\sum_{j>2k}\|\widetilde{M_{j}^{k,1}}\|_{L^p\rightarrow L^q}\leq C\sum_k2^{\frac{k}{2}(\frac{5}{p} - \frac{9}{q}-1)}\sum_{j>2k}2^{(\frac{1}{p} -\frac{2}{q}+  \epsilon)j}
\leq C_{p,q}.
\end{align*}
This and inequality (\ref{jle2k}) imply for $p,q$ satisfying $\frac{1}{2p} <\frac{1}{p} \le \frac{3}{5p}$, $\frac{3}{q} \le 1-\frac{1}{p} $, $\frac{1}{q} \ge \frac{1}{2} -\frac{1}{p} $, we have
\begin{equation}\label{equ:planem=1small}
\|\mathcal{M}f\|_{L^{q}}\leq  C_{p,q} \|f\|_{L^{p}}.
\end{equation}
By Theorem 1.1 in \cite{WL}, for each $2 \le p \le \infty$,
\begin{equation}\label{diag}
\|\mathcal{M}f\|_{L^{p}}\leq  C_{p} \|f\|_{L^{p}}, \hspace{0.5cm}f\in C_0^{\infty}(\mathbb{R}^2).
\end{equation}
Combining inequality (\ref{equ:planem=1small}) with inequality (\ref{diag}), then Theorem \ref{planetheorem} follows from the M. Riesz interpolation theorem.

\section{Proof of Theorem \ref{L^(p,q)therom}}
In order to prove Theorem \ref{L^(p,q)therom}, we will use  Whitney type decomposition and a bilinear estimate established by \cite{SL2}. By rescaling, we turn to estimate
\begin{equation}\label{estimate:L4+}
\mathcal{F}_{j}^{k}g(y,t)=\rho_1(y,t)\int_{{\mathbb{R}}^2}e^{i2^{j}(\xi \cdot y-t^2\xi_2\tilde{\Phi}(s,\delta))}a(2^{j}\xi,t)\rho_0(|\xi|)\tilde{\chi}(\frac{\xi_1}{\xi_2})\hat{g}(\xi)d\xi,
\end{equation}
since $\tilde{\Phi}(s,\delta)$ is homogeneous of degree one in $\xi$. Notice that by finite decomposition, we may assume that $\tilde{\chi}\in C_0^{\infty}([c_1,c_2])$, where
\[|c_1-c_2| \le \epsilon_{0}\]
with $\epsilon_{0}$ sufficiently small. By Whitney decomposition, we have
\[(\mathcal{F}_{j}^{k}g)^{2}= \sum_{l: (j-k)/2 \le l \le log1/\epsilon_{0}} \sum_{dist(C_{\theta}^{l}, C_{\theta^{\prime}}^{l}) \sim 2^{-l}} \mathcal{F}_{j}^{k}g_{\theta}^{l} \mathcal{F}_{j}^{k}g_{\theta^{\prime}}^{l},\]
where $\{C_{\theta}^{l}\}_{l: (j-k)/2 \le l \le log1/\epsilon_{0}}$ are sectors with center direction $\theta$ and angular $2^{-l}$,
\[\widehat{g_{\theta}^{l}}(\xi) = \hat{g}(\xi)\chi_{C_{\theta}^{l}}(\frac{\xi_{1}}{\xi_{2}}).\]
Here we abuse the notation by saying $dist(C_{\theta}^{l}, C_{\theta^{\prime}}^{l}) \sim 2^{-l}$ to mean $dist(C_{\theta}^{l}, C_{\theta^{\prime}}^{l}) \le 2^{-l}$ when $l=(j-k)/2$. By the similar argument as in \cite{SL2}, we can establish the orthogonality that for $q \ge 4$,
 \begin{align}\label{Eq3.2}
\|\mathcal{F}_{j}^{k}g\|_{_{L^{q}(\mathbb{R}^{3})}}
&\lesssim 2^{j \epsilon} \biggl\{\sum_{l: (j-k)/2 \le l \le log1/\epsilon_{0}} \biggl(\sum_{dist(C_{\theta}^{l}, C_{\theta^{\prime}}^{l}) \sim 2^{-l}} \| \mathcal{F}_{j}^{k}g_{\theta}^{l} \mathcal{F}_{j}^{k}g_{\theta^{\prime}}^{l}\|_{_{L^{q/2}(\mathbb{R}^{3})}}^{(q/2)^{\prime}} \biggl)^{1/(q/2)^{\prime}} \biggl \}^{1/2}.
\end{align}

It is sufficient to prove the following two lemmas.

\begin{lem}\label{main}
For each $l$: $2^{l} \ll 2^{(j-k)/2}$, we have
\begin{equation*}
\| \mathcal{F}_{j}^{k}g_{\theta}^{l} \mathcal{F}_{j}^{k}g_{\theta^{\prime}}^{l}\|_{_{L^{q/p}(\mathbb{R}^{3})}} \leq C 2^{-l+\epsilon j} 2^{(k + 3l -3j)\frac{p}{q}} \|g_{\theta}^{l}\|_{L^2}\|g_{\theta^{\prime}}^{l}\|_{L^2}.
\end{equation*}
\end{lem}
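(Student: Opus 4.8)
\textbf{Proof proposal for Lemma \ref{main}.}

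The plan is to prove the bilinear estimate by first reducing to a model oscillatory integral operator of the type treated in \cite{SL2}, then invoking their bilinear estimate on a single pair of well-separated sectors, and finally keeping careful track of all the scaling parameters $j$, $k$, $l$ and the exponent $p/q$. First I would change variables so that the two sectors $C_{\theta}^{l}$, $C_{\theta'}^{l}$ of angular width $2^{-l}$ and separation $\sim 2^{-l}$ are mapped to unit-separated, unit-sized caps; this is an affine map in $\xi$ together with an anisotropic rescaling in $(y,t)$, and it converts $\mathcal{F}_{j}^{k}g_{\theta}^{l}$ into an operator $\mathcal{G}_{j}^{k}$ whose phase $2^{j-2l}\Psi$ (the relevant effective frequency scale is $\lambda \sim 2^{j-2l}$, using $2^{l}\ll 2^{(j-k)/2}$ so $\lambda \gg 2^{k}$) carries a Hessian of size $\delta = 2^{-k}$ in the curved direction. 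The condition $2^{l}\ll 2^{(j-k)/2}$ guarantees that after rescaling the residual phase term $\delta^{2}\xi_{2}R$ from \eqref{phase function} is negligible, so the rescaled phase genuinely satisfies the "cinematic curvature condition" with curvature $\sim 2^{-k}$ uniformly.

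Next I would apply the bilinear oscillatory integral estimate of Lee \cite{SL2} (the variable-coefficient analogue of Tao's bilinear cone restriction estimate, which is the engine behind Theorem \ref{Lee}) to the pair $\mathcal{G}_{j}^{k}g_{\theta}^{l}$, $\mathcal{G}_{j}^{k}g_{\theta'}^{l}$ at frequency scale $\lambda = 2^{j-2l}$. On a single separated pair this yields an $L^{q/2}\to$ bilinear bound (for $q\ge$ the bilinear exponent, in particular $q\ge 10/3$, which is compatible with $q/p\ge$ the range in Theorem \ref{L^(p,q)therom}) of the form $\|\mathcal{G}_{j}^{k}g_{\theta}^{l}\mathcal{G}_{j}^{k}g_{\theta'}^{l}\|_{L^{q/p}} \lesssim \lambda^{-\alpha(p,q)}\|g_{\theta}^{l}\|_{2}\|g_{\theta'}^{l}\|_{2}$ where $\alpha(p,q)$ is the sharp decay exponent dictated by $L^2$-based interpolation; the curvature size $2^{-k}$ enters as a fixed power and produces the factor $2^{(k+3l-3j)p/q}$ after one unwinds the two rescalings and the $2^{-l}$ comes from the measure of one sector relative to the Jacobian of the rescaling map. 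Here the loss $2^{\epsilon j}$ absorbs the standard $\epsilon$-loss in the bilinear restriction estimate as well as finitely many harmless logarithmic factors.

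The main obstacle I expect is verifying that Lee's hypotheses hold \emph{uniformly in $k$} after the rescaling — precisely the point where the "cinematic curvature condition" fails for the original operator. The phase $-t^{2}\xi_{2}\tilde{\Phi}(s,\delta)$ has its curvature in the $\xi$-Hessian vanishing like $\delta = 2^{-k}$, so one cannot apply \cite{SL2} to $\mathcal{F}_{j}^{k}$ directly; the reparametrization must simultaneously (i) rescale the frequency annulus to unit size, (ii) rescale the sectors to unit caps, and (iii) rescale $(y,t)$ anisotropically to normalize the Hessian — and the compatibility of these three rescalings is exactly what the restriction $2^{l}\ll 2^{(j-k)/2}$ is engineered to allow. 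I would therefore spend the bulk of the argument checking that under this restriction the rescaled phase lies in a \emph{fixed} bounded family of phases satisfying $\mathrm{rank}\,\partial^{2}_{z\xi}\phi = n$ and the rank and single-sign conditions on $\partial^{2}_{\xi\xi}\langle\partial_{z}\phi,\theta\rangle$ with constants independent of $j$, $k$, $l$, so that a \emph{single} application of the bilinear theorem suffices, and then tracking the exponents through the change of variables to land on $2^{-l+\epsilon j}2^{(k+3l-3j)p/q}$.
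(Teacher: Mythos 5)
Your proposal matches the paper's strategy almost step for step: an affine change of variables $\xi_{1}=2^{-l}\eta_{1}+\kappa\eta_{2}$, $\xi_{2}=\eta_{2}$ maps the two separated $2^{-l}$-sectors to unit-separated unit caps; then an anisotropic rescaling of $(x_{1},x_{2})$ by $(2^{-l-k},2^{-2l-k})$ normalizes the Hessian of the phase to size $O(1)$ and places the effective frequency at $2^{j-2l-k}$ (your ``$\lambda=2^{j-2l}$ with Hessian $\sim 2^{-k}$'' is the same object before the final normalization); and the condition $2^{l}\ll 2^{(j-k)/2}$ is used exactly as you say, both to make the residual terms $2^{-3l-k}$ and $2^{-2l-2k}$ genuinely subordinate and to keep $2^{j-2l-k}\gg 1$. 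The paper then applies Theorem~1.2 of \cite{SL2} to the normalized pair, which is your single bilinear application, and the exponent bookkeeping $2^{(-2k-3l)p/q}\cdot 2^{(3k+6l-3j+\epsilon j)p/q}=2^{(k+3l-3j+\epsilon j)p/q}$ together with the sector Jacobian $2^{-l}$ reproduces the claimed bound.

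The one point you compress is the local-to-global passage: the normalization of the Hessian is done by rescaling a single $2^{-l-k}\times 2^{-2l-k}$ rectangle $Q_{0}$ in the $x$-plane to the unit square, so Lee's theorem only delivers the estimate on $Q_{0}$. The paper then tiles $\mathbb{R}^{2}$ by translates $Q,Q'$ of $Q_{0}$, uses integration by parts to show the kernel $K_{i}(x,z,t)$ of $\mathcal{G}_{j,l}^{k}$ concentrates the support of $\mathcal{G}_{j,l}^{k}g_{i}$ near $(z_{1}/2^{j-l},z_{2}/2^{j})$ in a rectangle of that same size, and sums the resulting almost-orthogonal local pieces. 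Your phrase ``one unwinds the two rescalings'' elides this tiling-plus-orthogonality step, which is needed because $\tilde{\rho}_{1}$ is supported in a fixed unit-scale set rather than a single $Q_{0}$. That step is standard but not omissible; with it supplied, your argument coincides with the paper's.
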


\begin{lem}\label{basic}
For each $l$: $2^{l} \approx 2^{(j-k)/2}$, we have
\begin{equation*}
\| \mathcal{F}_{j}^{k}g_{\theta}^{l} \mathcal{F}_{j}^{k}g_{\theta^{\prime}}^{l}\|_{_{L^{q/p}(\mathbb{R}^{3})}} \leq C 2^{-2j\frac{p}{q}} 2^{-\frac{j-k}{2}(1-\frac{p}{q})} \|g_{\theta}^{l}\|_{L^2}\|g_{\theta^{\prime}}^{l}\|_{L^2}.
\end{equation*}
\end{lem}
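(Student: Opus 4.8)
The plan is to prove Lemma~\ref{basic} by the \emph{trivial} bilinear estimate, since at the critical Whitney scale $2^{l}\approx 2^{(j-k)/2}$ there is no genuine bilinear gain left to exploit: it will be enough to interpolate a Plancherel-type $L^{2}$ bound with an elementary $L^{\infty}$ bound for a single operator $\mathcal{F}_{j}^{k}g_{\theta}^{l}$ and then apply H\"older (Cauchy--Schwarz) to the product. Note that the separation hypothesis $\mathrm{dist}(C_{\theta}^{l},C_{\theta'}^{l})\sim 2^{-l}$ will not be used here; only $2^{l}\approx 2^{(j-k)/2}$ matters.

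First I would record the $L^{2}$ estimate
\[
\|\mathcal{F}_{j}^{k}g_{\theta}^{l}\|_{L^{2}(\mathbb{R}^{3})}\leq C\,2^{-j}\|g_{\theta}^{l}\|_{L^{2}(\mathbb{R}^{2})}.
\]
For fixed $t\in[1/2,4]$ one writes $\mathcal{F}_{j}^{k}g_{\theta}^{l}(y,t)=\rho_{1}(y,t)\int_{\mathbb{R}^{2}}e^{i2^{j}\xi\cdot y}\,b_{t}(\xi)\,\widehat{g_{\theta}^{l}}(\xi)\,d\xi$, where
\[
b_{t}(\xi)=e^{-i2^{j}t^{2}\xi_{2}\tilde{\Phi}(s,\delta)}\,a(2^{j}\xi,t)\,\rho_{0}(|\xi|)\,\tilde{\chi}(\tfrac{\xi_{1}}{\xi_{2}})\,\chi_{C_{\theta}^{l}}(\tfrac{\xi_{1}}{\xi_{2}})
\]
is bounded uniformly in $t$ and $\xi$ by (\ref{symbola}). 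Thus for each $t$ the function $\mathcal{F}_{j}^{k}g_{\theta}^{l}(\cdot,t)$ is, up to the bounded factor $\rho_{1}$, the $2^{j}$-dilate of the inverse Fourier transform of $b_{t}\widehat{g_{\theta}^{l}}$, so Plancherel in $y$ gives $\|\mathcal{F}_{j}^{k}g_{\theta}^{l}(\cdot,t)\|_{L^{2}(\mathbb{R}^{2},dy)}\le C\,2^{-j}\|b_{t}\widehat{g_{\theta}^{l}}\|_{L^{2}}\le C\,2^{-j}\|g_{\theta}^{l}\|_{L^{2}}$; integrating the square over the fixed interval $t\in[1/2,4]$ finishes the claim. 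Secondly I would record the $L^{\infty}$ estimate
\[
\|\mathcal{F}_{j}^{k}g_{\theta}^{l}\|_{L^{\infty}(\mathbb{R}^{3})}\leq C\,2^{-l/2}\|g_{\theta}^{l}\|_{L^{2}(\mathbb{R}^{2})},
\]
which follows from Cauchy--Schwarz in $\xi$ together with the fact that $b_{t}$ is bounded and supported where $|\xi|\sim1$ and $\xi_{1}/\xi_{2}\in C_{\theta}^{l}$, a set of Lebesgue measure $\lesssim 2^{-l}$.

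Interpolating (M. Riesz) these two bounds with exponent $\theta=p/q$ yields $\|\mathcal{F}_{j}^{k}g_{\theta}^{l}\|_{L^{2q/p}(\mathbb{R}^{3})}\le C\,2^{-jp/q}2^{-\frac{l}{2}(1-p/q)}\|g_{\theta}^{l}\|_{L^{2}}$, and similarly for $g_{\theta'}^{l}$; here one uses that $p<q<2p$ in Theorem~\ref{L^(p,q)therom}, so $2\le 2q/p\le\infty$. Finally H\"older's inequality with $\frac{1}{q/p}=\frac{1}{2q/p}+\frac{1}{2q/p}$ gives
\[
\|\mathcal{F}_{j}^{k}g_{\theta}^{l}\,\mathcal{F}_{j}^{k}g_{\theta'}^{l}\|_{L^{q/p}(\mathbb{R}^{3})}\le \|\mathcal{F}_{j}^{k}g_{\theta}^{l}\|_{L^{2q/p}}\|\mathcal{F}_{j}^{k}g_{\theta'}^{l}\|_{L^{2q/p}}\le C\,2^{-2jp/q}2^{-l(1-p/q)}\|g_{\theta}^{l}\|_{L^{2}}\|g_{\theta'}^{l}\|_{L^{2}},
\]
and substituting $2^{-l}\approx 2^{-(j-k)/2}$ produces exactly the claimed inequality.

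As for the main difficulty: there is essentially no analytic obstacle in Lemma~\ref{basic} itself — it is deliberately the easy half of the dichotomy, the genuine bilinear input from \cite{SL2} being reserved for Lemma~\ref{main}. The only point needing care is the bookkeeping that makes this crude estimate suffice when fed back into the orthogonality inequality (\ref{Eq3.2}): one must check that the contribution of the endpoint scale $l=(j-k)/2$, summed over the $O(1)$-separated pairs $(C_{\theta}^{l},C_{\theta'}^{l})$, costs only an admissible factor $2^{\epsilon j}$, which is where $q\ge4$ (so $(q/2)'\le2$) and the near-orthogonality of the pieces enter, exactly as in \cite{SL2}.
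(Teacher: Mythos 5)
Your proposal is correct and takes essentially the same route as the paper: the paper also records the trivial $L^{2}\to L^{2}$ Plancherel estimate with gain $2^{-j}$ together with elementary $L^{\infty}$ bounds exploiting the $O(2^{-l})$ angular support, and then obtains the lemma by M.~Riesz interpolation and H\"older applied to the product. Your streamlined version, folding the paper's $L^{1}\to L^{\infty}$ and $L^{\infty}\to L^{\infty}$ bounds into a single Cauchy--Schwarz estimate $\|\mathcal{F}_{j}^{k}g_{\theta}^{l}\|_{L^{\infty}}\lesssim 2^{-l/2}\|g_{\theta}^{l}\|_{L^{2}}$, is a cosmetic simplification and lands on exactly the stated inequality.
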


Indeed, it can be observed from Lemma 2.7 in \cite{WL} that for each $l: (j-k)/2 \le l \le log1/\epsilon_{0}$,
\begin{equation*}
\| \mathcal{F}_{j}^{k}g_{\theta}^{l} \mathcal{F}_{j}^{k}g_{\theta^{\prime}}^{l}\|_{_{L^{\infty}(\mathbb{R}^{3})}} \leq C 2^{-2l+j}  \|g_{\theta}^{l}\|_{L^\infty}\|g_{\theta^{\prime}}^{l}\|_{L^\infty}.
\end{equation*}
If Lemma \ref{main} and Lemma \ref{basic} hold true, then by the M. Riesz interpolation theorem, for each $l$: $2^{l} \ll 2^{(j-k)/2}$, we obtain
\begin{equation}\label{Eq3.3}
\| \mathcal{F}_{j}^{k}g_{\theta}^{l} \mathcal{F}_{j}^{k}g_{\theta^{\prime}}^{l}\|_{_{L^{q/2}(\mathbb{R}^{3})}} \leq C 2^{\frac{2k}{q}} 2^{2(\frac{1}{2} -\frac{3}{q}- \frac{1}{p} +\epsilon)j} 2^{2(-1+\frac{3}{q}+ \frac{1}{p} )l} \|g_{\theta}^{l}\|_{L^p}\|g_{\theta^{\prime}}^{l}\|_{L^p}.
\end{equation}
And for each $l$: $2^{l} \approx 2^{(j-k)/2}$, we obtain
\begin{equation}\label{Eq3.4}
\| \mathcal{F}_{j}^{k}g_{\theta}^{l} \mathcal{F}_{j}^{k}g_{\theta^{\prime}}^{l}\|_{_{L^{q/2}(\mathbb{R}^{3})}} \leq C 2^{k(1-\frac{1}{p}-\frac{1}{q})} 2^{(-\frac{3}{q}- \frac{1}{p} )j}  \|g_{\theta}^{l}\|_{L^p}\|g_{\theta^{\prime}}^{l}\|_{L^p}.
\end{equation}
Since $\frac{3}{q} \le 1-\frac{1}{p} $, inequalities (\ref{Eq3.3}) and (\ref{Eq3.4}) imply that for each $l: (j-k)/2 \le l \le log1/\epsilon_{0}$,
\begin{equation}\label{Eq3.5}
\| \mathcal{F}_{j}^{k}g_{\theta}^{l} \mathcal{F}_{j}^{k}g_{\theta^{\prime}}^{l}\|_{_{L^{q/2}(\mathbb{R}^{3})}} \leq C 2^{k(1-\frac{1}{p}-\frac{1}{q})}  2^{2(\frac{1}{2} -\frac{3}{q}- \frac{1}{p} +\epsilon)j} \|g_{\theta}^{l}\|_{L^p}\|g_{\theta^{\prime}}^{l}\|_{L^p}.
\end{equation}
Since for each $l$, $C_{\theta}^{l}, C_{\theta^{\prime}}^{l}$ are almost disjoint, and the assumption that $\frac{1}{q} \ge \frac{1}{2} -\frac{1}{p} $, we get
\begin{align}\label{Eq3.6}
\sum_{dist(C_{\theta}^{l}, C_{\theta^{\prime}}^{l}) \sim 2^{-l}} \| g_{\theta}^{l}\|_{_{L^{p}(\mathbb{R}^{3})}}^{2(q/2)^{\prime}} \lesssim \|g\|_{L^{p}}^{2(q/2)^{\prime}}; \quad
\sum_{dist(C_{\theta}^{l}, C_{\theta^{\prime}}^{l}) \sim 2^{-l}} \| g_{\theta^{\prime}}^{l}\|_{_{L^{p}(\mathbb{R}^{3})}}^{2(q/2)^{\prime}} \lesssim \|g\|_{L^{p}}^{2(q/2)^{\prime}}.
\end{align}
It follows from inequalities (\ref{Eq3.2}), (\ref{Eq3.5}), H\"{o}lder's inequality and (\ref{Eq3.6}) that
\begin{align}
\|\mathcal{F}_{j}^{k}g\|_{_{L^{q}(\mathbb{R}^{3})}} \leq C 2^{\frac{k}{2}(1-\frac{1}{p}-\frac{1}{q})} 2^{(\frac{1}{2} -\frac{3}{q}- \frac{1}{p} +\epsilon)j} \|g\|_{L^p({\mathbb{R}}^2)},
\end{align}
which implies Theorem \ref{L^(p,q)therom} by rescaling.

Let's turn to prove Lemma \ref{main} and Lemma \ref{basic}. However, the proof of Lemma \ref{basic} is quite trivial. In fact, for each $l,j,k$, we have the following basic estimates
\begin{align}
\|\mathcal{F}_{j}^{k}g_{\theta}^{l}\|_{_{L^{2}(\mathbb{R}^{3})}} \leq C 2^{-j} \|\widehat{g_{\theta}^{l}}\|_{L^2({\mathbb{R}}^2)},
\end{align}
\begin{align}
\|\mathcal{F}_{j}^{k}g_{\theta}^{l}\|_{_{L^{\infty}(\mathbb{R}^{3})}} \leq C 2^{-l} \|\widehat{g_{\theta}^{l}}\|_{L^\infty({\mathbb{R}}^2)},
\end{align}
\begin{align}
\|\mathcal{F}_{j}^{k}g_{\theta}^{l}\|_{_{L^{\infty}(\mathbb{R}^{3})}} \leq \|\widehat{g_{\theta}^{l}}\|_{L^1({\mathbb{R}}^2)},
\end{align}
then Lemma \ref{basic} can be obtained by  the M. Riesz interpolation theorem and H\"{o}lder's inequality. Therefore, we will prove Lemma \ref{main} in the rest of this  section.

We first do some reductions. Put $\theta=(\theta_{1}, \theta_{2})$, $\theta^{\prime}=(\theta_{1}^{\prime}, \theta_{2}^{\prime})$, and
\[\kappa = \frac{1}{2} \biggl(\frac{\theta_{1}}{\theta_{2}}+\frac{\theta_{1}^{\prime}}{\theta_{2}^{\prime}}), \:\ \kappa \thickapprox 1,\]
then  $C_{\theta}^{l}, C_{\theta^{\prime}}^{l}$ can be written as
\[C_{\theta}^{l}:=\biggl\{(\xi_{1},\xi_{2}): \frac{1}{2}2^{-l} \le \frac{\xi_{1}}{\xi_{2}} - \kappa \le \frac{3}{2} 2^{-l} \biggl\},\]
\[C_{\theta^{\prime}}^{l}:=\biggl\{(\xi_{1},\xi_{2}):-\frac{3}{2}2^{-l} \le \frac{\xi_{1}}{\xi_{2}} - \kappa \le -\frac{1}{2}2^{-l-1} \biggl\}.\]
By changes of variables,
\begin{equation}\label{Eq3.11}
\xi_{1} = 2^{-l}\eta_{1} + \kappa \eta_{2}, \quad \xi_{2} = \eta_{2},
\end{equation}
we get
\[\mathcal{C}_{1}:=\biggl\{(\eta_{1},\eta_{2}): \frac{1}{2} \le \frac{\eta_{1}}{\eta_{2}} \le \frac{3}{2}  \biggl\},\]
\[\mathcal{C}_{2}:=\biggl\{(\eta_{1},\eta_{2}): -\frac{3}{2} \le \frac{\eta_{1}}{\eta_{2}} \le -\frac{1}{2}  \biggl\},\]
and reduce Lemma \ref{main}  to the following lemma.
\begin{lem}\label{last}
For each $l$: $2^{l} \ll 2^{(j-k)/2}$, and any function $g_{i}$ with supp $\hat{g_{i}} \subset \mathcal{C}_{i}$,
$i=1,2$, we have
\begin{equation}\label{Eqglobal}
\| \mathcal{G}_{j,l}^{k}g_{1} \mathcal{G}_{j,l}^{k}g_{2}\|_{_{L^{q/p}(\mathbb{R}^{3})}} \leq C 2^{\epsilon j} 2^{(k + 3l -3j)\frac{p}{q}} \|g_{1}\|_{L^2}\|g_{2}\|_{L^2},
\end{equation}
where
\[\mathcal{G}_{j,l}^{k}g_{i}(x,t)=\tilde{\rho}_1(x,t)\int_{{\mathbb{R}}^2}e^{i2^{j}\Psi(x,t,\eta,\delta,2^{-l})} \tilde{a}(2^{j}\eta,t)\tilde{\rho_0}(|\eta|) \widehat{g_{i}}(\eta)d\eta\]
with the phase function
\[\Psi(x,t,\eta,\delta,2^{-l}) = 2^{-l}x_{1}\eta_{1}+x_{2}\eta_{2} +  2^{-2l-k}\frac{3\phi^{\prime}(0)}{t}\frac{\eta_{1}^{2}}{\eta_{2}}  +2^{-3l-k}\frac{\phi^{\prime}(0)}{t}\frac{\eta_{1}^{3}}{\eta_{2}^{2}}+2^{-2l-2k}\eta_2\tilde{R}(\frac{\eta_{1}}{\eta_{2}},t,2^{-l},\delta),\]
\[\tilde{R}(\frac{\eta_{1}}{\eta_{2}},t,2^{-l},\delta) =\frac{\eta_{1}^{2}}{2\eta_{2}^{2}}(\partial _{1}^{2}R)(\kappa,t,\delta) +2^{-l}\frac{\eta_{1}^{3}}{6\eta_{2}^{3}}(\partial _{1}^{3}R)(\kappa,t,\delta),\]
and $\tilde{\rho_{1}}(x,t) \in C_{0}^{\infty}(\mathbb{R}^{3})$,
\[\tilde{a}(2^{j}\eta,t) = a(2^{j}\tau(\eta),t),\quad \tilde{\rho_0}(|\eta|)=\rho_0(|\tau(\eta)|), \quad \tau: (\eta_{1},\eta_{2}) \rightarrow (2^{-l}\eta_{1} + \kappa \eta_{2},\eta_{2}).\]
\end{lem}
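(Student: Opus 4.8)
The plan is to prove the bilinear estimate \eqref{Eqglobal} by interpolating a bilinear $L^{2}\times L^{2}\to L^{2}$ estimate against the trivial $L^{\infty}$ bound, exactly as in the variable-coefficient bilinear machinery of \cite{SL2}, but tracking the dependence on the extra parameters $l$, $k$ and $\delta=2^{-k}$ that are hidden inside the phase $\Psi(x,t,\eta,\delta,2^{-l})$. First I would record the $L^{\infty}\times L^{\infty}\to L^{\infty}$ estimate: since $\tilde a(2^{j}\eta,t)\tilde\rho_{0}(|\eta|)$ is a compactly supported symbol of order zero and the $\eta$-integration is over a set of measure $O(1)$, one has $\|\mathcal{G}_{j,l}^{k}g_{i}\|_{L^{\infty}}\lesssim \|\widehat{g_{i}}\|_{L^{1}}$, hence $\|\mathcal{G}_{j,l}^{k}g_{1}\,\mathcal{G}_{j,l}^{k}g_{2}\|_{L^{\infty}}\lesssim \|\widehat{g_{1}}\|_{L^{1}}\|\widehat{g_{2}}\|_{L^{1}}$; by the almost-disjointness of $\mathcal{C}_{1},\mathcal{C}_{2}$ and Bernstein on the $O(1)$-sized supports this is controlled in terms of $\|g_{1}\|_{L^{2}}\|g_{2}\|_{L^{2}}$. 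Interpolating this with the $L^{2}\times L^{2}\to L^{2}$ estimate below, and noting $1<q/p<2$ in the stated range, will produce \eqref{Eqglobal}, provided the $L^{2}$ estimate comes with the gain $2^{(k+3l-3j)/1}$-type weight; more precisely, the target exponent $2^{(k+3l-3j)p/q}$ is exactly the $\theta=p/q$ interpolant between $2^{0}$ (the $L^{\infty}$ endpoint) and $2^{k+3l-3j}$ (the $L^{2}$ endpoint, after squaring).

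The heart of the matter is therefore the bilinear $L^{2}$ estimate
\[
\|\mathcal{G}_{j,l}^{k}g_{1}\,\mathcal{G}_{j,l}^{k}g_{2}\|_{L^{2}(\mathbb{R}^{3})}\lesssim 2^{\epsilon j}\,2^{k+3l-3j}\,\|g_{1}\|_{L^{2}}\|g_{2}\|_{L^{2}}.
\]
To prove it I would expand $\|\mathcal{G}_{j,l}^{k}g_{1}\,\mathcal{G}_{j,l}^{k}g_{2}\|_{L^{2}}^{2}$ as a quadruple integral in $(\eta,\eta^{\prime},\zeta,\zeta^{\prime})$ with $\eta,\zeta\in\mathcal{C}_{1}$, $\eta^{\prime},\zeta^{\prime}\in\mathcal{C}_{2}$, integrate the $(x,t)$ variables first, and use non-stationary phase / the stability of the determinant of the relevant mixed Hessian. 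The key geometric input is that the map $(x,t)\mapsto \big(\nabla_{x}\Psi(x,t,\eta,\delta,2^{-l}),\ \nabla_{x}\Psi(x,t,\eta^{\prime},\delta,2^{-l}),\ \partial_{t}\Psi\big)$ has Jacobian bounded below, uniformly in $k$ once $2^{l}\ll 2^{(j-k)/2}$: the leading $2^{-2l-k}$ term $3t^{-1}\phi'(0)\eta_{1}^{2}/\eta_{2}$ in $\Psi$ supplies the "cinematic" transversality after differentiating in $t$, while the separation $|\eta_{1}/\eta_{2}-\eta_{1}'/\eta_{2}'|\gtrsim 1$ between $\mathcal{C}_{1}$ and $\mathcal{C}_{2}$ supplies the curvature/transversality in the spatial variables; the higher-order terms ($2^{-3l-k}$ and $2^{-2l-2k}$, the latter carrying $\tilde R$) are genuinely lower order under $2^{l}\ll 2^{(j-k)/2}$ and can be absorbed. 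After a stationary-phase computation one obtains pointwise kernel bounds with rapid decay off a slab, from which Schur's test (or the argument in \cite{SL2}) yields the claimed $L^{2}$ bound with the weight $2^{k+3l-3j}$ up to $2^{\epsilon j}$ losses coming from the number of angular sectors and from Sobolev-type corrections in the $t$-variable.

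I expect the main obstacle to be verifying the lower bound on that Jacobian \emph{uniformly in $k$} — i.e. checking that the phase $\Psi$, despite its principal curvature being weighted by $2^{-k}$ and thus degenerating as $k\to\infty$, still satisfies the bilinear transversality hypotheses of \cite{SL2} after the rescaling built into $\mathcal{G}_{j,l}^{k}$, precisely on the range $2^{l}\ll 2^{(j-k)/2}$. This is exactly the range condition that makes the $2^{-3l-k}$ and $2^{-2l-2k}$ terms subordinate to the $2^{-2l-k}$ term, and the factor $2^{k}$ in the final bound is the price paid for this weak curvature. Once this transversality is in hand, the remaining steps — the change of variables \eqref{Eq3.11}, controlling the symbol $\tilde a(2^{j}\eta,t)$, applying Schur's test, and the interpolation to reach $L^{q/p}$ — are routine and parallel to \cite{SL2} and to Lemma 2.7 of \cite{WL}.
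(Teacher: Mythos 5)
Your proposal has a genuine gap, and it also claims a quantitatively incorrect intermediate estimate.

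The quantitative problem first. Plugging $q/p = 2$ into the conclusion \eqref{Eqglobal} forces the $L^{2}$ bilinear bound to read
\[
\| \mathcal{G}_{j,l}^{k}g_{1}\,\mathcal{G}_{j,l}^{k}g_{2}\|_{L^{2}(\mathbb{R}^{3})} \lesssim 2^{\epsilon j}\,2^{(k + 3l -3j)/2}\,\|g_{1}\|_{L^2}\|g_{2}\|_{L^2},
\]
whereas you posit $2^{k+3l-3j}$. Since in the relevant range $k+3l-3j<0$, your claimed bound is strictly stronger than what the lemma itself would give at $q/p=2$; it almost certainly fails, and it certainly does not follow from the bilinear machinery of \cite{SL2} after the rescaling implicit in $\mathcal{G}_{j,l}^{k}$. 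Moreover, your interpolation scheme runs in the wrong direction: in the stated range one has $1< q/p <2$, and interpolating an $L^{2}\times L^{2}\to L^{2}$ bound with an $L^{\infty}$ bound produces $L^{r}$ bounds only for $r\geq 2$. To descend to $L^{q/p}$ with $q/p<2$ you would need the compact support of $\tilde\rho_{1}$ and Hölder, but then you would get the exponent $2^{(k+3l-3j)/2}$, which is worse than the required $2^{(k+3l-3j)p/q}$ because $p/q>1/2$ and the exponent is negative. So even repaired, this route does not close.

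The structural gap is that you treat the transversality of the Jacobian as something to be checked directly in the original variables. But there the leading curvature coefficient is $2^{-2l-k}$, which degenerates, so there is no uniform lower bound to verify, and your Schur-test kernel estimates would pick up losses in $k$ and $l$ that you do not track. The paper avoids exactly this issue via a change of variables: one first localizes to a $2^{-l-k}\times 2^{-2l-k}$ rectangle $Q_{0}$ in $x$, then rescales $x$ so that $Q_{0}$ becomes the unit square and the phase becomes $\tilde\Psi$ at frequency $2^{j-k-2l}$ with $O(1)$ curvature. At that point Theorem 1.2 of \cite{SL2} applies as a black box, in the target $L^{q/p}$ directly (not via an $L^{2}$ endpoint), yielding the $2^{(3k+6l-3j+\epsilon j)p/q}$ factor; the Jacobian of the rescaling contributes $2^{(-2k-3l)p/q}$, and the product is precisely $2^{(k+3l-3j)p/q}$. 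Finally the global estimate over $\mathbb{R}^{3}$ is assembled from the local one by decomposing $g_{i}$ into pieces on $2^{-l-k}\times 2^{-2l-k}$ rectangles, using kernel decay to show each $\mathcal{G}_{j,l}^{k}g_{i}$ is essentially localized to the corresponding rectangle, checking (using $2^{l}\ll 2^{(j-k)/2}$ and the uncertainty principle) that the truncated pieces still have Fourier support near $\mathcal{C}_{i}$, and summing over overlapping rectangle pairs. The rescaling step — which converts a degenerate-curvature phase into a uniformly nondegenerate one at a reduced frequency — is the central idea of this proof and is missing from your proposal.
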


More concretely, if Lemma \ref{last} holds true, under the transformation (\ref{Eq3.11}),
denote
\[\hat{h_{i}}(\eta) =e^{i2^{j} \frac{(2^{-l}\eta_{1}+\kappa \eta_{2})^{2}}{2\eta_{2}} } 2^{-l}\chi_{i}(\frac{\eta_{1}}{\eta_{2}} ) \widehat{g \circ \tau^{-1}}(\eta), \quad i=1,2, \]
by Plancherel,
\begin{align}
\| \mathcal{F}_{j}^{k}g_{\theta}^{l} \mathcal{F}_{j}^{k}g_{\theta^{\prime}}^{l}\|_{_{L^{q/p}(\mathbb{R}^{3})}}
&\leq  \| \mathcal{G}_{j,l}^{k}h_{1} \mathcal{G}_{j,l}^{k} h_{2}  \|_{_{L^{q/p}(\mathbb{R}^{3})}} \nonumber\\
&\leq C 2^{\epsilon j} 2^{(k + 3l -3j)\frac{p}{q}} \Pi_{i-1}^{2} \biggl\|2^{-l} e^{i2^{j} \frac{(2^{-l}\eta_{1}+\kappa \eta_{2})^{2}}{2\eta_{2}} } \chi_{i}(\frac{\eta_{1}}{\eta_{2}}  ) \widehat{g \circ \tau^{-1}}(\eta) \biggl\|_{L^2} \nonumber\\
&= C 2^{-l+\epsilon j} 2^{(k + 3l -3j)\frac{p}{q}} \|g_{\theta}^{l}\|_{L^2}\|g_{\theta^{\prime}}^{l}\|_{L^2}, \nonumber
\end{align}
where
\[\chi_{1}(\frac{\eta_{1}}{\eta_{2}}) = \chi_{C_{\theta}^{l}}(2^{-l}\frac{\eta_{1}}{\eta_{2}} + \kappa), \quad \chi_{2}(\frac{\eta_{1}}{\eta_{2}}) = \chi_{C_{\theta^{\prime}}^{l}}(2^{-l}\frac{\eta_{1}}{\eta_{2}} + \kappa).\]

Now we turn to the proof of Lemma \ref{last}. Notice that in the phase function
$\Psi(x,t,\eta,\delta,2^{-l})$, the terms $2^{-3l-k}\frac{\phi^{\prime}(0)}{t}\frac{\eta_{1}^{3}}{\eta_{2}^{2}}+2^{-2l-2k}\eta_2\tilde{R}(\frac{\eta_{1}}{\eta_{2}},t,2^{-l},\delta)$
can be considered as a small perturbation of  $2^{-2l-k}\frac{3\phi^{\prime}(0)}{t}\frac{\eta_{1}^{2}}{\eta_{2}} $, since $k$ and $l$ can be sufficiently large. Let $Q_{0}=[-2^{-l-k}, 2^{-l-k}] \times [-2^{-2l-k}, 2^{-2l-k}]$,
then
\begin{equation}\label{Eqlocal1}
\| \mathcal{G}_{j,l}^{k}g_{1} \mathcal{G}_{j,l}^{k}g_{2}\|_{_{L^{q/p}(Q_{0})}} \leq  2^{(-2k - 3l)\frac{p}{q}} \| Tg_{1} Tg_{2}\|_{_{L^{q/p}(Q(0,1))}},
\end{equation}
where $Q(0,1)$ denotes the unit square and
\[Tg_{i}(x,t)= \int_{{\mathbb{R}}^2}e^{i2^{j-k-2l}\tilde{\Psi}(x,t,\eta,\delta,2^{-l})} \tilde{a}(2^{j}\eta,t)\tilde{\rho_0}(|\eta|) \widehat{g_{i}}(\eta)d\eta\]
with the phase function
\[\tilde{\Psi}(x,t,\eta,\delta,2^{-l}) = x_{1}\eta_{1}+x_{2}\eta_{2}+\frac{3\phi^{\prime}(0)}{t}\frac{\eta_{1}^{2}}{\eta_{2}}  +2^{-l}\frac{\phi^{\prime}(0)}{t}\frac{\eta_{1}^{3}}{\eta_{2}^{2}}+2^{-k}\eta_2\tilde{R}(\frac{\eta_{1}}{\eta_{2}},t,2^{-l},\delta).\]
It follows from  Theorem 1.2 in \cite{SL2} that
\begin{equation}\label{Eq3.14}
 \| Tg_{1} Tg_{2}\|_{_{L^{q/p}(Q(0.1))}} \leq C 2^{(3k + 6l-3j + \epsilon j)\frac{p}{q}} \|g_{1}\|_{L^2}\|g_{2}\|_{L^2}.
\end{equation}
Inequalities (\ref{Eqlocal1}) and (\ref{Eq3.14}) imply that
\begin{equation}\label{Eqlocal}
\| \mathcal{G}_{j,l}^{k}g_{1} \mathcal{G}_{j,l}^{k}g_{2}\|_{_{L^{q/p}(Q_{0})}} \leq C 2^{\epsilon j} 2^{(k + 3l -3j)\frac{p}{q}} \|g_{1}\|_{L^2}\|g_{2}\|_{L^2},
\end{equation}
and by translation invariance in the $x-$plane, this inequality actually holds for any $2^{-l-k} \times 2^{-2l-k}$ rectangles. Now we will show that the global estimate (\ref{Eqglobal}) follows from the local estimate (\ref{Eqlocal}). We have
\begin{align}
\mathcal{G}_{j,l}^{k}g_{i}(x,t)=\tilde{\rho}_1(x,t) \int_{\mathbb{R}^{2}} \int_{{\mathbb{R}}^2}e^{i2^{j}\Psi(x,t,\eta,\delta,2^{-l})-iz \cdot \eta} \tilde{a}(2^{j}\eta,t)\tilde{\rho_0}(|\eta|) \chi_{i}(\frac{\eta_{1}}{\eta_{2}})  d\eta g_{i}(z)dz. \nonumber
\end{align}
Denote
\begin{align}
K_{i}(x,z,t)=  \int_{{\mathbb{R}}^2}e^{i2^{j}\Psi(x,t,\eta,\delta,2^{-l})-iz \cdot \eta} \tilde{a}(2^{j}\eta,t)\tilde{\rho_0}(|\eta|) \chi_{i}(\frac{\eta_{1}}{\eta_{2}}) d\eta. \nonumber
\end{align}
Integration by parts show that for each $i=1,2$,
\begin{align}
|K_i(x,z,t)| \leq  \frac {C _{N}}{\biggl(1+ 2^{j-l}\biggl | | x_{1}-\frac{z_{1}}{2^{j-l}}| +\mathcal{O}(2^{-l-k}) \biggl| + 2^{j}\biggl | | x_{1}-\frac{z_{1}}{2^{j}}|+\mathcal{O}(2^{-2l-k}) \biggl|   \biggl)^{N}}, \nonumber
\end{align}
which implies that $(x_{1},x_{2})$ can be considered roughly in a $2^{-l-k} \times 2^{-2l-k}$ rectangle with the center $(\frac{z_{1}}{2^{j-l}},\frac{z_{2}}{2^{j}})$. Therefore
\begin{align}\label{sum}
&\mathcal{G}_{j,l}^{k}g_{1}(x,t)\mathcal{G}_{j,l}^{k}g_{2}(x,t) \nonumber\\
&=\tilde{\rho}_1(x,t)^{2} \sum_{Q,Q^{\prime}} \Pi_{cQ \cap cQ^{\prime}}(x) \int_{\mathbb{R}^{2}} K_{1}(x,z,t) \Pi_{Q}(\frac{z_{1}}{2^{j-l}},\frac{z_{2}}{2^{j}}) g_{1}(z)dz \nonumber\\
&\quad \times \int_{\mathbb{R}^{2}} K_{2}(x,z,t) \Pi_{Q^{\prime}}(\frac{z_{1}}{2^{j-l}},\frac{z_{2}}{2^{j}}) g_{2}(z)dz
 +C_{2N} 2^{-jN} 2^{-kN} \tilde{\rho}_1(x,t)^{2} \|g_{1}\|_{L^2}\|g_{2}\|_{L^2} \nonumber\\
 &=\tilde{\rho}_1(x,t)^{2} \sum_{Q,Q^{\prime}} \Pi_{cQ \cap cQ^{\prime}}(x) \mathcal{G}_{j,l}^{k} \biggl( \Pi_{Q}(\frac{z_{1}}{2^{j-l}},\frac{z_{2}}{2^{j}})g_{1}(z) \biggl)(x,t) \mathcal{G}_{j,l}^{k} \biggl( \Pi_{Q^{\prime}}(\frac{z_{1}}{2^{j-l}},\frac{z_{2}}{2^{j}}) g_{2}(z) \biggl)(x,t) \nonumber\\
 &\quad +C_{2N} 2^{-jN} 2^{-kN} \tilde{\rho}_1(x,t)^{2} \|g_{1}\|_{L^2}\|g_{2}\|_{L^2},
\end{align}
in which $Q,Q^{\prime}$ are $2^{-l-k} \times 2^{-2l-k}$ rectangles, by $\Pi_{Q}$ and  $\Pi_{Q^{\prime}}$ we mean the restriction on $Q,Q^{\prime}$, respectively. Notice that by uncertainty principle and the assumption that $2^{l} \ll 2^{(j-k)/2}$, the Fourier transform of $\Pi_{Q}(\frac{z_{1}}{2^{j-l}},\frac{z_{2}}{2^{j}}) g_{1}(z)$ is supported in a sufficiently small neighborhood of $\mathcal{C}_{1}$. For the same reason, the Fourier transform of $\Pi_{Q^{\prime}}(\frac{z_{1}}{2^{j-l}},\frac{z_{2}}{2^{j}}) g_{2}(z)$ is supported in a sufficiently small neighborhood of $\mathcal{C}_{2}$. Then inequalities (\ref{Eqlocal}) and (\ref{sum}) imply that
\begin{align}
&\| \mathcal{G}_{j,l}^{k}g_{1} \mathcal{G}_{j,l}^{k}g_{2}\|_{_{L^{q/p}( \mathbb{R}^{3} )}} \nonumber\\
&\leq C 2^{(k + 3l -3j+\epsilon j)\frac{p}{q}}\sum_{Q,Q^{\prime}: cQ \cap cQ^{\prime} \neq \emptyset} \biggl\|\Pi_{Q}(\frac{z_{1}}{2^{j-l}},\frac{z_{2}}{2^{j}}) g_{1}(z) \biggl\|_{L^{2}} \biggl\|\Pi_{Q^{\prime}}(\frac{z_{1}}{2^{j-l}},\frac{z_{2}}{2^{j}})g_{2}(z) \biggl\|_{L^{2}}  \nonumber\\
 &\quad +C_{2N} 2^{-jN} 2^{-kN} \tilde{\rho}_1(x,t)^{2} \|g_{1}\|_{L^2}\|g_{2}\|_{L^2} \nonumber\\
 & \leq C 2^{\epsilon j} 2^{(k + 3l -3j+\epsilon j)\frac{p}{q}} \|g_{1}\|_{L^2}\|g_{2}\|_{L^2}.
\end{align}
This completes the proof of Lemma \ref{last}.

\section{Necessary Condition}
Schlag proved  that for the circular maximal function in $2$-dimensional case
 \begin{equation}
\mathcal{M_{C}}f(y) = \sup_{t \in [1,2]}\left|\int_{\mathcal{S}^{1}}f(y-tx)d\sigma(x)\right|,
\end{equation}
conditions $(C1)$-$(C3)$ below are necessary for  the $L^{p} \rightarrow L^{q}$ estimates to hold:

(C1) $\frac{1}{q} \le \frac{1}{p}$;

(C2) $ \frac{1}{q} \ge \frac{1}{2p}$;

(C3) $ \frac{1}{q} \ge \frac{3}{p}-1$.
\\
The constructions of counterexamples in \cite{WS1, WS2} for (C1)-(C2)  can be generalized to maximal functions along smooth curves of various types. However, this is not true for (C3).

In fact, after a change of coordinates and variable, the  circular maximal function may be written as
\begin{equation}
\mathcal{M_{C}}f(y) \approx \sup_{t \in [1,2]}\left|\int_{\mathbb{R}}f(y_1-x,y_2-\sqrt{t^{2}-x^{2}})\eta(x)dx\right|,
\end{equation}
for some cut-off function $\eta(x)$ supported around the origin. It is clear that for each $t \in [1,2]$, the curves
\[\Gamma_{\mathcal{C}_{t}}(x)= \sqrt{t^{2}-x^{2}}\]
pass through $(0,t)$ with uniform normal direction $(0,1)$. Therefore, when $f$ is a characteristic function on $[-\delta^{1/2}, \delta^{1/2}] \times [-\delta, \delta]$, $\delta \ll 1$, then fix $y \in [-\delta^{1/2}, \delta^{1/2}] \times [1, 2]$, there exists $t_{y} \in [1,2]$ such that
\[(y_1-x,y_2-\sqrt{t_{y}^{2}-x^{2}}) \in [-\delta^{1/2}, \delta^{1/2}] \times [-\delta, \delta]\]
for all $x \in [-\delta^{1/2}, \delta^{1/2}]$.
Therefore
\[ \left|\int_{-\delta^{1/2}}^{\delta^{1/2}}f(y_1-x,y_2-\sqrt{t_{y}^{2}-x^{2}})\eta(x)dx\right| \geq \delta^{1/2}.\]
Then (C3) is established.

The necessary conditions in \cite{WS1, WS2} can be generalized to some  classes  of maximal functions, such as
\begin{equation}
\mathcal{M}_{\mathcal{P}_{1}}f(y) := \sup_{t \in [1,2]}\left|\int_{\mathbb{R}}f(y_1-tx,y_2-t^{2}(\phi(x)+1))\eta(x)dx\right|,
\end{equation}
where $\phi(x)$ satisfies $\phi(0) =\phi^{\prime}(0)= 0, \hspace{0.1cm}\phi^{\prime \prime}(0) \neq 0$. Specially, it is valid for $\phi(x)=x^{2}$.
It can be checked  that $\mathcal{M}_{\mathcal{P}_{1}}$ can not be bounded from $L^{p}(\mathbb{R}^{2})$ to $L^{q}(\mathbb{R}^{2})$ if one of (C1)-(C3) holds true.
But this generalization depends heavily on the geometric property of the curve. For evidence, consider the maximal function along paraboloid passing through the origin,
\begin{equation}\label{Eq4.4}
\mathcal{M}_{\mathcal{P}_{2}}f(y) := \sup_{t \in [1,2]}\left|\int_{\mathbb{R}}f(y_1-tx,y_2-t^{2}x^{2})\eta(x)dx\right|.
\end{equation}
It can be proved that the $L^{p} \rightarrow L^{q}$ estimate holds true  provided that $ \frac{1}{2p} < \frac{1}{q} \le \frac{1}{p} $, $\frac{1}{q}  > \frac{2}{p} -1$. A simple calculation shows that the line $\frac{1}{q} = \frac{3}{p} -1$  crosses the triangle determined by $ \frac{1}{2p} < \frac{1}{q} \le \frac{1}{p} $ and $\frac{1}{q}  > \frac{2}{p} -1$. Therefore, (C3) is not a necessary condition for the $L^{p} \rightarrow L^{q}$ estimate to hold for the maximal function in (\ref{Eq4.4}).

In Theorem \ref{planetheorem}, we studied maximal function along paraboloid passing through the origin  with small perturbations. We do not know how to prove the sharpness of Theorem \ref{planetheorem}. Instead we show the following theorem.

\begin{thm}\label{negative}
Set

$(C1^{\star})$ $ \frac{1}{q} \leq \frac{1}{p}$;

$(C2^{\star})$ $ \frac{1}{q} \geq \frac{1}{2p}$;

$(C3^{\star})$ $ \frac{1}{q} \ge \frac{2}{p}-1$. \\
Then $(C1^{\star})$, $(C2^{\star})$ and $(C3^{\star})$ are necessary for inequality (\ref{equ:planem=1}).
\end{thm}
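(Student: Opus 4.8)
The plan is to establish each of the three conditions as necessary by producing, for every pair $(1/p,1/q)$ that violates it, a family of test functions along which the ratio $\|\mathcal{M}f\|_{L^{q}}/\|f\|_{L^{p}}$ blows up; all three families are nonisotropic-dilation analogues of the counterexamples of Schlag \cite{WS1} and Schlag--Sogge \cite{WS2}. For $(C1^{\star})$ I would use translation invariance together with the fact that, since $\eta$ is supported near $0$ and $t\in[1,2]$, the dilated arc $x\mapsto(tx,t^{2}x^{2}\phi(x))$ stays in a fixed ball; hence $\mathcal{M}$ sends functions supported in $B_{1}(v)$ to functions supported in $B_{C}(v)$ for an absolute $C$. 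Fixing $f_{0}=\chi_{B_{1}(0)}$ and a $10C$-separated set $\{v_{i}\}_{i=1}^{N}$, put $f_{N}=\sum_{i}f_{0}(\cdot-v_{i})$; then $\|f_{N}\|_{L^{p}}\sim N^{1/p}$, while on each $B_{C}(v_{i})$ one has $\mathcal{M}f_{N}=\mathcal{M}f_{0}(\cdot-v_{i})$ with pairwise disjoint supports, so $\|\mathcal{M}f_{N}\|_{L^{q}}=N^{1/q}\|\mathcal{M}f_{0}\|_{L^{q}}$ with $\|\mathcal{M}f_{0}\|_{L^{q}}>0$. Feeding this into (\ref{equ:planem=1}) forces $N^{1/q}\lesssim N^{1/p}$ for all $N$, i.e. $1/q\le 1/p$.

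For $(C3^{\star})$ I would take the parabolic cap $f_{\delta}=\chi_{R_{\delta}}$ with $R_{\delta}=[-\delta^{1/2},\delta^{1/2}]\times[-\delta,\delta]$. For $y=(y_{1},y_{2})$ with $|y_{1}|\le c\delta^{1/2}$ and $|y_{2}-\phi(0)y_{1}^{2}|\le c\delta$ (a set of measure $\gtrsim\delta^{3/2}$) one chooses $t=t(y)\in[1,2]$ so that the second coordinate of $y-(tx,t^{2}x^{2}\phi(x))$ is centred at $x_{0}=y_{1}/t$; since $|x_{0}|\lesssim\delta^{1/2}$, over an $x$-interval of length $\sim\delta^{1/2}$ around $x_{0}$ the graph $x\mapsto t^{2}x^{2}\phi(x)$ oscillates by only $O(\delta)$, so $(y_{1}-tx,\,y_{2}-t^{2}x^{2}\phi(x))\in R_{\delta}$ for all such $x$ and hence $\mathcal{M}f_{\delta}(y)\gtrsim\delta^{1/2}$. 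Thus $\|\mathcal{M}f_{\delta}\|_{L^{q}}\gtrsim\delta^{1/2}(\delta^{3/2})^{1/q}$ while $\|f_{\delta}\|_{L^{p}}\sim\delta^{3/(2p)}$, and (\ref{equ:planem=1}) forces $\tfrac12+\tfrac{3}{2q}\ge\tfrac{3}{2p}$, i.e. $\tfrac1q\ge\tfrac1p-\tfrac13$, which already yields $(C3^{\star})$ when $p\ge\tfrac32$. For $1\le p<2$ the estimate in fact fails for every $q$: taking $f_{\delta}=\chi_{B_{\delta}(0)}$, the dilated arc $x\mapsto(tx,t^{2}x^{2}\phi(x))$ meets $B_{\delta}(0)$ transversally, so $\mathcal{M}f_{\delta}\gtrsim\delta$ on the $\delta$-neighbourhood of the curvilinear set $\{(y_{1},y_{1}^{2}\phi(y_{1}/t)):t\in[1,2],\,|y_{1}|\le\epsilon_{0}\}$, a set of measure $\gtrsim 1$; thus $\|\mathcal{M}f_{\delta}\|_{L^{q}}/\|f_{\delta}\|_{L^{p}}\gtrsim\delta^{1-2/p}\to\infty$, and $(C3^{\star})$ holds vacuously there (when $p\ge 2$ one may also simply note $2/p-1\le 0\le 1/q$). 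Combining the two ranges gives $(C3^{\star})$ for all $p$.

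For $(C2^{\star})$ I would use the focusing example: let $f_{\delta}=\chi_{E_{\delta}}$ with $E_{\delta}$ the $\delta$-neighbourhood of the reflected arc $\{-(x,x^{2}\phi(x)):|x|\le\epsilon_{0}\}$, so $\|f_{\delta}\|_{L^{p}}\sim\delta^{1/p}$. At $y=0$, $t=1$ the translated arc $y-(tx,t^{2}x^{2}\phi(x))$ lies entirely in $E_{\delta}$, so $\mathcal{M}f_{\delta}(0)\gtrsim 1$; a stationary-phase/geometry computation, comparing for $y$ near $0$ the curve $x\mapsto y-(tx,t^{2}x^{2}\phi(x))$ with the arc defining $E_{\delta}$ and optimising over $t\in[1,2]$, shows $\mathcal{M}f_{\delta}(y)\gtrsim\min\{1,\sqrt{\delta/|y|}\,\}$ on a fixed neighbourhood of $0$. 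For $q>4$ this gives $\|\mathcal{M}f_{\delta}\|_{L^{q}}\gtrsim\delta^{2/q}$, so (\ref{equ:planem=1}) forces $2/q\ge 1/p$, i.e. $(C2^{\star})$; for $q\le 4$ it gives $\|\mathcal{M}f_{\delta}\|_{L^{q}}\gtrsim\delta^{1/2}$, forcing $p\ge 2$ and hence $q\le 4\le 2p$, i.e. again $(C2^{\star})$.

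The main obstacle is the analysis of the focusing examples, namely quantifying how large the set on which $\mathcal{M}f$ is comparable to a fixed power of $\delta$ can be; this amounts to understanding the envelope of the dilated curve family $\{(tx,t^{2}x^{2}\phi(x)):t\in[1,2]\}$. Here lies the difference with the circle, whose translated curves $x\mapsto\sqrt{t^{2}-x^{2}}$ are vertical translates, so that the parameter $t$ lets the second coordinate of $y$ range over a full interval and produces the stronger condition $1/q\ge 3/p-1$: in the present problem all the arcs $x\mapsto(tx,t^{2}x^{2}\phi(x))$ pass through the origin and the perturbation $\phi'(0)\ne 0$ separates them only at third order, an effect negligible on the cap scale, so $t$ cannot be used to focus. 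This is precisely why only $(C3^{\star})$ — and not the sharp analogue of $(C3)$ — can be obtained by these constructions, and why the sharpness of Theorem \ref{planetheorem} remains open.
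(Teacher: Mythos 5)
The proposal is correct, but for the one condition the paper actually proves in detail, $(C3^{\star})$, it takes a genuinely different route. The paper defers $(C1^{\star})$ and $(C2^{\star})$ to the standard constructions of Schlag and Schlag--Sogge (so your translation-invariance argument for $(C1^{\star})$ and your focusing argument for $(C2^{\star})$ fill a gap left implicit; the focusing bound $\mathcal{M}f_{\delta}\gtrsim\min\{1,\sqrt{\delta/|y|}\}$ is asserted rather than verified, and the exact envelope geometry for parabolic dilations differs from the circle — the family $\{y-\gamma_{t}(x)\}_{t}$ all osculates to second order at $y$, so tangency with the reference arc happens only along a thin curve — but the resulting level-set estimates, hence the final bound for $\|\mathcal{M}f_{\delta}\|_{L^{q}}$, come out the same). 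For $(C3^{\star})$ the paper takes $f=\chi_{[-\delta,\delta]^{2}}$, fixes $y_{1}\approx\epsilon_{0}/2$, and exploits the nonvanishing cubic coefficient $\phi'(0)$ to choose $N\sim\epsilon_{0}^{3}/\delta$ times $t_{i}$ producing that many pairwise disjoint $\delta$-cubes $Q_{i}$ on which $\mathcal{M}f\gtrsim\delta$; this gives $|\cup_i Q_i|\approx\delta$ and hence exactly $1/q\geq 2/p-1$. Your argument instead uses the parabolic cap $[-\delta^{1/2},\delta^{1/2}]\times[-\delta,\delta]$, which makes no use of the $t$-supremum (it is really the necessary condition for the fixed-$t$ averaging operator) and yields $1/q\geq 1/p-1/3$, then supplements this with $\chi_{B_{\delta}(0)}$ to rule out $p<2$ outright. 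Both routes are valid, but note that the cap bound $1/q\geq 1/p-1/3$ is already subsumed by $(C2^{\star})$ once $p\geq 3/2$ and is vacuous (by your own $p<2$ counterexample) when $p<3/2$, so it never isolates $(C3^{\star})$ directly; your proof recovers $(C3^{\star})$ only because it is vacuous for $p<2$ and trivial for $p\geq 2$. The paper's square-plus-$t$-spacing construction is the one that directly matches the stated inequality $1/q\geq 2/p-1$ and is the natural nonisotropic analogue of the circular bush example. Your concluding explanation of why the circular exponent $3/p-1$ degenerates to $2/p-1$ here — that the dilated arcs all pass through the origin with matching curvature and separate only at third order via $\phi'(0)$ — is exactly the mechanism the paper's construction leverages.
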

\begin{proof}
We only prove the necessity of $(C3^{\star})$. Without loss of generality, we may choose
\[\phi(x)=x.\]
Assume that  $\epsilon_{0}$ is a small positive constant and
\[\eta(x)=1, \quad  x \in [0,\epsilon_{0}].\]
 Take $0< \delta \ll \epsilon_{0}$.

Then we select $\{t_{i}\}_{i=1}^{N} \subset [1,2]$, $N=100^{-1}\epsilon_{0}^{3}\delta^{-1}$, such that for each $1 \leq i \leq N-1$,
\[|t_{i}-t_{i+1}| = 100\epsilon_{0}^{-3}\delta.\]
Choose
\[Q_{i}=[\frac{\epsilon_{0}}{2}-\frac{\delta}{2}, \frac{\epsilon_{0}}{2} +\frac{\delta}{2}] \times [\frac{\epsilon_{0}^{2}}{4} + \frac{\epsilon_{0}^{3}}{8}\frac{1}{t_{i}}-\frac{\delta}{2}, \frac{\epsilon_{0}^{2}}{4} + \frac{\epsilon_{0}^{3}}{8}\frac{1}{t_{i}} +\frac{\delta}{2} ].\]
By the construction  of $\{t_{i}\}_{i=1}^{N}$, $\{Q_{i}\}_{i=1}^{N}$ are disjoint cubes, so
\begin{equation}\label{Eq4.5}
\biggl| \bigcup_{i=1}^{N}Q_{i} \biggl| \approx \delta.
\end{equation}

When $f$ is a characteristic function on $[-\delta, \delta] \times [-\delta, \delta]$, we have
\begin{equation}\label{Eq4.6}
\|f\|_{L^{p}(\mathbb{R}^{2})} \sim \delta^{\frac{2}{p}}.
\end{equation}
And for fixed $y \in \cup_{i=1}^{N}Q_{i} $, there exists $t_{i}$ such that
\[\biggl( y_1-x,y_2-x^{2}-\frac{x^{3}}{t_{i}} \biggl) \in [-\delta, \delta] \times [-\delta, \delta]\]
for all $x \in [\frac{\epsilon_{0}}{2}-\frac{\delta}{2}, \frac{\epsilon_{0}}{2} +\frac{\delta}{2}]$,
which implies
\[ f \biggl( y_1-x,y_2-x^{2}-\frac{x^{3}}{t_{i}} \biggl) = 1,\]
and then
\begin{equation}\label{Eq4.7}
 \left|\int_{\frac{\epsilon_{0}}{2}-\frac{\delta}{2}}^{\frac{\epsilon_{0}}{2}+\frac{\delta}{2}}f \biggl( y_1-x,y_2-x^{2}-\frac{x^{3}}{t_{i}} \biggl)\eta(x)dx\right| \geq \delta.
 \end{equation}

Since $t \in [1,2]$, changes of variables and inequality (\ref{Eq4.7}) show that for every $y \in \cup_{i=1}^{N}Q_{i} $,
\begin{equation}\label{Eq4.8}
\biggl |\mathcal{M}f(y) \biggl| \geq \delta.
\end{equation}
Inequalities (\ref{Eq4.5}) and (\ref{Eq4.8}) imply that
\begin{equation}\label{4.9}
\|\mathcal{M}f\|_{L^{q}(\mathbb{R}^{2})} \geq  \delta^{1+\frac{1}{q}},
\end{equation}
combining this with inequality (\ref{Eq4.6}), we get
\begin{equation}\label{4.9}
\frac{\|\mathcal{M}f\|_{L^{q}(\mathbb{R}^{2})}}{\|f\|_{L^{p}(\mathbb{R}^{2})}} \geq  \delta^{1+\frac{1}{q}-\frac{2}{p}}.
\end{equation}
Then  the necessity of $(C3^{\star})$ is established since $\delta$ can be sufficiently small.
\end{proof}


\begin{flushleft}
\vspace{0.3cm}\textsc{Wenjuan Li\\School of Mathematics and Statistics\\Northwest Polytechnical University\\710129\\Xi'an, People's Republic of China}

\vspace{0.3cm}\textsc{Huiju Wang\\School of Mathematics Sciences\\University of Chinese Academy of Sciences\\100049\\Beijing, People's Republic of China}

\end{flushleft}

\end{document}